\numberwithin{equation}{section}
\newcommand{\CC}{\mathbb {C}}
\newcommand{\RR}{\mathbb{R}}
 \DeclareMathOperator{\dist}{dist}
\renewcommand{\phi}{\varphi}
\newcommand{\rea}{{\rm Re}\,}
\newcommand{\ima}{{\rm Im}\,}
\newcommand{\vep}{\varepsilon}
\newcommand{\co}{\mathbb{C}}
\newcommand{\ZZ}{\mathcal{Z}}
\newcommand{\cp}{\mathbb{C^+}}
\newcommand{\cm}{\mathbb{C^-}}
\newtheorem{Thm}{Theorem}[section]
\newtheorem{theorem}[Thm]{Theorem}
\newtheorem{lemma}[Thm]{Lemma}
\newtheorem{proposition}[Thm]{Proposition}
\newtheorem{corollary}[Thm]{Corollary}
\newtheorem{remark}[Thm]{Remark}
\newtheorem{example}[Thm]{Example}
\begin{document}
\sloppy
\title[Krein-type theorems and Cauchy--de Branges spaces]
{Krein-type theorems and ordered structure \\ for Cauchy--de Branges spaces}
\author{Evgeny Abakumov, Anton Baranov, Yurii Belov}

\address{Evgeny Abakumov,
\newline  University Paris-Est,  LAMA (UMR CNRS 8050),  Marne-la-Vall\'ee, France
\newline {\tt evgueni.abakoumov@u-pem.fr}
\smallskip
\newline \phantom{x}\,\, Anton Baranov,
\newline Department of Mathematics and Mechanics, St.~Petersburg State University, St.~Petersburg, Russia,
\newline National Research University Higher School of Economics, St.~Petersburg, Russia,
\newline {\tt anton.d.baranov@gmail.com}
\smallskip
\newline \phantom{x}\,\, Yurii Belov,
\newline  St.~Petersburg State University, St. Petersburg, Russia,
\newline {\tt j\_b\_juri\_belov@mail.ru}
}
\thanks{ The work was supported by the joint grant of Russian Foundation for Basic Research 
(project 17-51-150005-NCNI-a) and CNRS, France 
(project PRC CNRS/RFBR 2017-2019 ``oyaux reproduisants dans
des espaces de Hilbert de fonctions analytiques''), 
and by the Ministry of Education and Science of the
Russian Federation (project 1.3843.2017). }

\maketitle

\begin{abstract}
We extend some results of M.G. Krein to the class of entire functions 
which can be represented as ratios of discrete Cauchy transforms in the plane.
As an application we obtain new versions of de Branges' Ordering Theorem 
for nearly invariant subspaces in a class of Hilbert spaces of 
entire functions. Examples illustrating sharpness of the obtained 
results are given. 
\end{abstract}

\section{Introduction}
\label{int}

\subsection{Krein's theorem.}
M.G. Krein's theorem about the Cartwright class functions 
plays a seminal role in entire function theory and its 
applications to spectral theory of linear operators.
Recall that an entire function $F$ is said to be of {\it Cartwright class}
if it is of finite exponential type and
$$
\int_{\RR} \frac{\log^+|F(x)|}{1+x^2}dx <\infty.
$$
Krein's theorem can be stated 
as follows (for the necessary background see Section \ref{prel}): 
\medskip
\\
{\bf Theorem (M.G. Krein).} {\it Let $F$ be an entire function. If $F$ 
is a function of bounded type both in the upper half-plane $\mathbb{C}^+$
and the lower half-plane $\mathbb{C}^-$, then $F$ is a function of 
Cartwright class. In particular, $F$ is of finite exponential type
and its type is equal to $\max ({\rm mt}_+(F), {\rm mt}_-(F))$, where
${\rm mt}_+(F)$ and ${\rm mt}_-(F)$ denote the mean type of $F$ in 
$\cp$ and in $\cm$, respectively.  }
\medskip

For different approaches to this result see 
\cite[Part II, Chapter 1]{hj} or \cite[Lecture 16]{lev};
its applications to the spectral theory of non-dissipative operators
can be found, e.g., in ~\cite[Section IV.8]{Gohb_Krein}.

A typical situation when Krein's theorem is applicable is when $F$ can be represented
as a ratio of two Cauchy transforms of some discrete measures on $\RR$. Namely, 
for $T = \{t_n\}_{n=1}^\infty \subset \RR$ and $a= \{a_n\}_{n=1}^\infty \in \ell^1$ 
consider the discrete {\it Cauchy transform}
$$
\mathcal{C}_a(z)  = \sum_n \frac{a_n}{z- t_n}.
$$
Condition $a\in \ell^1$ can be relaxed. Assume that
$t_n \ne 0$ and, for some $k\in \mathbb{N}$, 
$\sum_n |t_n|^{-k-1}|a_n| <\infty$. Then 
we define the {\it regularized Cauchy transform} as
\begin{equation}
\label{reg}
\mathcal{C}_{a, k} (z)  = \sum_n a_n\bigg(\frac{1}{z- t_n} +\frac{1}{t_n} +
+\dots + \frac{z^{k-1}}{t_n^k}\bigg) = 
z^k \sum_n \frac{a_n}{t_n^k (z-t_n)}
\end{equation}
(we do not need to regularize the Cauchy kernel at infinity when $k=0$).
The functions of the form $\mathcal{C}_{a, k}$ are of bounded type 
in $\cp$ and in $\cm$. Therefore, if an entire function $F$ can be represented as 
$\mathcal{C}_{a, k}/\mathcal{C}_{b, m}$, 
then $F$ is of finite exponential type. 

A special case of the above statement is the following theorem, 
also due to Krein
(see \cite[Theorem 4]{krein47} or \cite[Lecture 16]{lev}): {\it Assume
that $F$ is an entire function, which is real on $\RR$,
with simple real zeros $t_n \ne 0$ and such that,
for some integer $k\ge 0$, we have
$$
\sum_n \frac{1}{|t_n|^{k+1} |F'(t_n)|}<\infty
$$
and 
\begin{equation}
\label{krein}
\frac{1}{F(z)} = R(z) + \sum_n
\frac{1}{F'(t_n)} \cdot \bigg(\frac{1}{z-t_n} +\frac{1}{t_n}+
\cdots + \frac{z^{k-1}}{t_n^k} \bigg), 
\end{equation}
where $R$ is some polynomial. 
Then $F$ is of Cartwright class.}
Krein \cite[Theorem 5]{krein47} showed also that the condition 
$t_n\in\mathbb{R}$ can be relaxed to the Blaschke condition
$\sum_n |t_n|^{-2} |\ima t_n| <\infty$.
Some further refinements of this result are due to
A.G.~Bakan and V.B.~Sherstyukov (see, e.g., \cite{shers}
and references therein).
\medskip

One of the goals of this paper is to extend the above results to the case 
of Cauchy transforms of measures which are supported by some discrete 
set $\{t_n\}$ in $\CC$ where $t_n$ {\it are no longer real}. 
In what follows we assume that $T=\{t_n\}\subset \CC$, $t_n$ 
are pairwise distinct, and $|t_n|\to \infty$
as $n\to\infty$. To simplify the formulas we assume, as above,
that $0\notin T$ 
(if $0\in T$ then an obvious modification of the formulas is required, but all 
results remain true). 
\medskip
\\
{\bf Question.} {\it Let $F$ be an entire function such that 
$F = \mathcal{C}_{a, k}/\mathcal{C}_{b, m}$ for some $a,b, k, m$. 
Under what conditions on $T$ can we conclude that $F$ is a function 
of finite exponential type?}
\medskip

We find several conditions on $T$ ensuring that this 
(and even more) is true. We also provide some examples which 
show the sharpness of these conditions.


\subsection{The spaces of Cauchy transforms} 
One motivation for the study of the above question is related 
to spectral theory of rank one perturbations of compact normal operators. 
Recently, D.V.~Yakubovich and the second author \cite{by}
applied a functional model 
to the study of rank one perturbations of compact selfadjoint operators.
Using this model, a number of results was obtained about completeness 
and spectral synthesis for such perturbations. 
The functional model in question acts in the so-called de Branges spaces  
of entire functions which can be identified with the spaces of Cauchy 
transforms of discrete measures supported by $\RR$. 
To extend the results of \cite{by} to the case 
of rank one perturbations of {\it normal} (non-selfadjoint) compact operators one needs to 
have analogues of Krein's theorems for the case of non-real $t_n$. 
A similar functional model for perturbations
of normal operators was constructed in \cite{by1}; 
it also acts in some space of discrete Cauchy transforms, 
which we introduce now. 

Let $T = \{t_n\}_{n=1}^\infty$ be a set as above 
  and let 
$\mu=\sum_n\mu_n\delta_{t_n}$ be a positive measure such that
$\sum_n \frac{\mu_n}{|t_n|^2 +1}<\infty$.
Also let $A$ be an entire function which has only simple zeros and whose zero set 
$\ZZ_A$ coincides with $T$. With any such $T$, $A$ and $\mu$
we associate the space $\mathcal{H}(T,A,\mu)$ of entire functions,
$$
\mathcal{H}(T,A,\mu):=\biggl{\{}f:f(z)= A(z)\sum_n\frac{a_n\mu^{1/2}_n}{z-t_n},
\quad a = \{a_n\}\in\ell^2\biggr{\}}
$$
equipped with the norm
$\|f\|_{\mathcal{H}(T,A,\mu)}:=\|a\|_{\ell^2}$. In what follows, the spaces
${\mathcal{H}(T,A,\mu)}$ will be called {\it Cauchy--de Branges spaces}.

The spaces $\mathcal{H}(T,A,\mu)$ were introduced in full generality
by Yu. Belov, T. Mengestie
and K. Seip \cite{bms}. Essentially, they are spaces of Cauchy transforms. 
We multiply them by the entire function $A$   to get rid of poles and make the elements entire, but 
essentially the space does not depend on the choice of $A$. The spaces with the same 
$T$, $\mu$ and different $A$-s are isomorphic. 
In what follows we will always assume that $T$ has a finite 
convergence exponent (i.e., $\sum_n |t_n|^{-K} <\infty$ for some $K>0$)
and $A$ is some canonical product of the corresponding order. 
We call the pair $(T, \mu)$ the {\it spectral data} for $\mathcal{H}(T,A,\mu)$. 

It is noted in \cite{bms} that each space $\mathcal{H}(T,A,\mu)$ 
is a reproducing kernel Hilbert space and, moreover, if $\mathcal{H}$
is a reproducing kernel Hilbert space of entire functions such that
\medskip
\begin{enumerate} 
\item [(i)]
$\mathcal{H}$ has the {\it division property}, that is, $\frac{f(z)}{z-w} 
\in \mathcal{H}$ whenever $f\in\mathcal{H}$ and $f(w) = 0$,
\medskip
\item [(ii)]
there exists a Riesz basis of reproducing kernels in $\mathcal{H}$,
\end{enumerate}
\medskip
then $\mathcal{H} = \mathcal{H}(T,A,\mu)$ (as sets with equivalence of norms) 
for some choice of the parameters. 

Note that the functions $\overline{A'(t_n)} \mu_n \cdot \frac{A(z)}{z-t_n}$
form an orthogonal basis in $\mathcal{H}(T,A,\mu)$ and are 
the reproducing kernels at the points $t_n$.  

In the case when $T\subset \RR$ and $A$ is real on $\RR$, 
the space $\mathcal{H}(T,A,\mu)$ is a de Branges space. 
De Branges spaces' theory is a deep and important field 
which has numerous applications to operator theory, spectral theory of differential 
operators and even to number theory. For the basics of de Branges theory we refer to 
de Branges' monograph \cite{br} and to \cite{rom}; 
some further results and applications can be found in \cite{abb, by, lag, mp, os, rem}. 

Since the spaces $\mathcal{H}(T,A,\mu)$ are defined in terms of Cauchy
transforms and also are a generalization of de Branges spaces, 
the term {\it a Cauchy--de Branges space} seems to be appropriate.

We believe it is a noteworthy problem to extend certain aspects of de Branges theory to 
the more general setting of Cauchy--de Branges spaces.
One of the most striking features of de Branges spaces is 
the ordered structure 
of their subspaces which are themselves de Branges spaces:
{\it If $\mathcal{H}_1$ and $\mathcal{H}_2$ are two de Branges subspaces 
of a de Branges space $\mathcal{H}$, then either 
$\mathcal{H}_1 \subset \mathcal{H}_2$ or
$\mathcal{H}_2 \subset \mathcal{H}_1$} \cite[Theorem 35]{br}. 
Here we study this problem for Cauchy--de Branges spaces and,
as an application of the Krein-type theorems obtained in the first part of the paper,
establish the ordering property for a class of these spaces. 
 

\subsection{Main results} 
We will develop the Krein-type theory for measures with non-real supports in the 
following three cases:
\medskip
\begin{enumerate} 
\item [(i)] $\mathbf{Z:}$ $T$ is the zero set of some entire function 
of zero exponential type; 
\smallskip
\item [(ii)]
$\mathbf{\Pi:}$ $T$ lies in some strip and has finite convergence exponent;
\smallskip
\item [(iii)] $\mathbf{A_\gamma}:$  $T$ lies in some angle of size $\pi\gamma$, $0<\gamma<1$,
and the convergence exponent of $T$ is strictly less than  $\gamma^{-1}$.
\end{enumerate}

\begin{theorem}
\label{main1}
Let $T$ satisfy one of the conditions $\mathbf{Z}$, $\mathbf{\Pi}$ 
or $\mathbf{A_\gamma}$. 
Assume that $F$ is an entire function such that $F = 
\mathcal{C}_{a, k}/\mathcal{C}_{b, m}$, 
where 
$\mathcal{C}_{a, k}$ and $\mathcal{C}_{b, m}$ are regularized 
Cauchy transforms with poles in $T$ defined in \eqref{reg}. 
Then $F$ is a function of finite exponential type.

In cases $\mathbf{Z}$ and $\mathbf{A_\gamma}$, the function 
$F$ is of zero exponential type. 
In cases $\mathbf{\Pi}$ and $\mathbf{A_\gamma}$, 
$F$ is of Cartwright class with respect to some line. 
\end{theorem}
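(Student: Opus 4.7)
The plan is to derive in each case polynomial bounds on the regularized Cauchy transforms $\mathcal{C}_{a,k}$ and $\mathcal{C}_{b,m}$ away from $T$, and then invoke Krein's theorem or a Phragm\'en--Lindel\"of argument to conclude the claimed growth property of $F$. The basic polynomial estimate is obtained by splitting the defining sum according to $|t_n|\le 2|z|$ versus $|t_n|>2|z|$: the tail is controlled by the summability assumption on $\{a_n/t_n^{k+1}\}$, and the main part by lower bounds on $|z-t_n|$ coming from the geometry of $T$.

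In case $\mathbf{\Pi}$, with $T\subset\{|\im z|\le h\}$, the elementary bound $|z-t_n|\ge|\im z|-h$ yields polynomial estimates on $\mathcal{C}_{a,k}$ and $\mathcal{C}_{b,m}$ in each of the half-planes $\{\im z>h+1\}$ and $\{\im z<-h-1\}$. Hence $F$, which is entire, is of bounded type in both of these translated half-planes. After the translation $z\mapsto z+i(h+1)$, bounded type covers $\mathbb{C}^+$, and similarly a symmetric translation covers $\mathbb{C}^-$; since $F$ is entire and the two regions together cover $\mathbb{C}$ up to a horizontal strip, a strip-version of Krein's theorem gives that $F$ is of finite exponential type and of Cartwright class with respect to any horizontal line outside the strip.

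For case $\mathbf{A_\gamma}$, after rotation $T$ lies in the sector $\Gamma=\{|\arg z|<\pi\gamma/2\}$ of opening $\pi\gamma<\pi$. The splitting argument, using now $|z-t_n|\ge c(|z|+|t_n|)$ in the complementary angle, yields polynomial bounds on $\mathcal{C}_{a,k},\,\mathcal{C}_{b,m}$ in any closed sub-sector of that complementary angle (of opening $\pi(2-\gamma)$, which exceeds $\pi$). Let $A$ be the canonical product vanishing on $T$, of order $\rho<1/\gamma$. A minimum-modulus estimate for $A$ shows the entire products $A\mathcal{C}_{a,k}$ and $A\mathcal{C}_{b,m}$ are of order at most $\rho$. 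Since $\rho\gamma<1$, Phragm\'en--Lindel\"of in the sector $\Gamma$ of opening $\pi\gamma$ upgrades the polynomial bound on the boundary rays to a polynomial bound throughout $\Gamma$, so that $A\mathcal{C}_{a,k}$ and $A\mathcal{C}_{b,m}$ are polynomially bounded on all of $\mathbb{C}$. Since $F$ is entire, the quotient is then of zero exponential type, and a Krein-type argument with respect to a line transverse to $\Gamma$ supplies the Cartwright property. The main obstacle here is the minimum-modulus step needed to guarantee that $A\mathcal{C}_{a,k}$ has the expected order on $\Gamma$, where $A$ has all its zeros; the sharp relation $\rho<1/\gamma$ is exactly what makes Phragm\'en--Lindel\"of applicable.

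Case $\mathbf{Z}$ uses the zero-type function $B$ vanishing on $T$ in place of the canonical product. Minimum modulus for $B$ gives $|B(z)|\ge\exp(-\epsilon|z|)$ outside small exceptional discs around $T$, for any $\epsilon>0$; combined with the polynomial bound on $\mathcal{C}_{a,k}$ away from $T$, this makes $B\mathcal{C}_{a,k}$ and $B\mathcal{C}_{b,m}$ into entire functions of zero exponential type. The delicate step, which is the principal obstacle of this case, is to pass from zero exponential type of numerator and denominator to zero exponential type of the entire quotient $F$: one cannot divide pointwise, because $B\mathcal{C}_{b,m}$ may vanish. I would handle this through a Hadamard-factorization argument exploiting that entireness of $F$ forces the zeros of $B\mathcal{C}_{b,m}$ to be contained, with multiplicities, in those of $B\mathcal{C}_{a,k}$; the cancellation then reduces $F$ to a canonical product whose zero-counting function is dominated by that of $B\mathcal{C}_{a,k}$, which is in turn controlled by the already established growth estimate, giving zero exponential type for $F$.
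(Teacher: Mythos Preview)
Your treatment of case $\mathbf{Z}$ is essentially the paper's: multiply by a zero-type function $A$ with $\mathcal{Z}_A=T$, observe via Cartan-type estimates that $A\mathcal{C}_{a,k}$ and $A\mathcal{C}_{b,m}$ are entire of zero exponential type, and pass to the quotient using minimum-modulus bounds on the denominator. (The paper simply cites standard minimum-modulus estimates here; your Hadamard-factorization workaround is unnecessary.)

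The other two cases have genuine gaps.

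\textbf{Case $\mathbf{\Pi}$.} You never use the hypothesis that $T$ has finite convergence exponent, which is part of condition $\mathbf{\Pi}$. Your appeal to a ``strip-version of Krein's theorem'' is unsubstantiated: bounded type in $\{\ima z>h\}$ and $\{\ima z<-h\}$ gives no control of the entire function $F$ in the strip $\{|\ima z|\le h\}$, and those half-planes do not cover $\mathbb{C}$, so Krein's theorem does not apply. The paper uses the finite convergence exponent to pick a canonical product $A$ of finite order $\rho$, so that $H_j=A\mathcal{C}_{\cdot,\cdot}$ and hence $F=H_1/H_2$ have order at most $\rho$ \emph{a priori} (Lemma~\ref{gr1}). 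With this in hand, the bounded-type estimates of Lemma~\ref{boun0} give $\log|F(z)|\lesssim|z|$ outside two small angles about the real axis, and ordinary Phragm\'en--Lindel\"of in those angles (of opening $<\pi/\rho$) closes the argument. The paper explicitly remarks afterward that it is not known whether the finite-convergence-exponent assumption can be dropped; your sketch would, if it worked, settle that question.

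\textbf{Case $\mathbf{A_\gamma}$.} The assertion that $A\mathcal{C}_{a,k}$ and $A\mathcal{C}_{b,m}$ are polynomially bounded on all of $\mathbb{C}$ is false: these are entire functions of order $\rho>0$ and would then be polynomials. Your Phragm\'en--Lindel\"of step fails because the polynomial bound on the boundary rays of $\Gamma$ is for $\mathcal{C}_{a,k}$, which is not analytic in $\Gamma$; the function that \emph{is} analytic there, namely $A\mathcal{C}_{a,k}$, is only bounded on those rays by $|A(z)|\cdot|z|^N$, which is not polynomial. The paper proceeds quite differently: it works directly with $F$, which by Lemma~\ref{boun} is in the Smirnov class in every half-plane disjoint from the sector, so Lemma~\ref{boun0} gives $\log|F(z)|\lesssim|z|$ in the complementary angle. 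For the sector itself the paper uses a de~Branges integral form of Phragm\'en--Lindel\"of \cite[Theorem~1]{br}: after multiplying $F$ by an outer function $G$ in $\mathbb{C}^+$ with $|G|=|F|^{-1}$ on $\mathbb{R}$ and by a suitable $e^{ihz}$, one substitutes $z\mapsto z^{\gamma'}$ (with $\rho(F)<1/\gamma'$) and verifies the integral hypothesis using the outer-function estimate~\eqref{out}. This yields finite exponential type; zero type and the Cartwright property then follow from the Smirnov-class structure in the complementary half-planes.
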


As a corollary of Theorem \ref{main1} 
we see that if a finite order function $F$ with zeros 
in a strip or a function of order less than $\gamma^{-1}$ with zeros in the angle 
of size $\pi\gamma$ admits the representation \eqref{krein}, then $F$ is a function 
of exponential type. The first of these observations was proved in 
\cite{shers} where Krein-type theorems for functions with zeros in a strip were studied.  

The relation between the size of the angle and the order 
in the case $\mathbf{A_\gamma}$ is optimal.

\begin{theorem}
\label{count_a} 
For any $\gamma \in (0,1)$ there exists an entire function $F$  of order precisely $\gamma^{-1}$  
such that all its zeros $t_n$ are simple, lie in an angle of size $\pi\gamma$
and 
$$
\sum_n \frac{1}{|F'(t_n)|} <\infty, \qquad  
\frac{1}{F(z)} = \sum_n \frac{1}{F'(t_n) (z-t_n)}.
$$
\end{theorem}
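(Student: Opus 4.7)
The plan is to build $F$ explicitly as a canonical product whose zero set is a single ray inside the target angle, and to verify the two remaining requirements by direct asymptotic analysis. Fix $\rho:=\gamma^{-1}>1$, let $p:=\lfloor\rho\rfloor\ge 1$, and take $t_n:=(n/c)^{\gamma}$ on the positive real axis for a constant $c>0$. Define
\[
F(z):=\prod_{n=1}^\infty E(z/t_n,p),\qquad E(w,p)=(1-w)\exp\!\Bigl(w+\tfrac{w^2}{2}+\cdots+\tfrac{w^p}{p}\Bigr).
\]
The counting function is $\lfloor cr^\rho\rfloor$, so $F$ is an entire function of exact order $\rho=\gamma^{-1}$ and finite type, with simple zeros at $\{t_n\}$. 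These zeros lie on the positive real axis and hence in the angle $\{|\arg z|\le\pi\gamma/2\}$ of opening $\pi\gamma$.

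The derivative identity
\[
F'(t_n)=-\frac{e^{H_p}}{t_n}\prod_{m\ne n}E(t_n/t_m,p),\qquad H_p=1+\tfrac{1}{2}+\cdots+\tfrac{1}{p},
\]
reduces the first summability condition to a real-variable product estimate, since $t_n/t_m=(n/m)^\gamma>0$. Factors with $m\gg n$ are bounded, and factors with $|m-n|$ small contribute only polynomially in $n$ via $(1-(n/m)^\gamma)$. The crucial contribution is from $m\ll n$: here the Weierstrass regularization dominates and $|E((n/m)^\gamma,p)|\asymp\exp((n/m)^{\gamma p}/p)$. Summing $\sum_{m<n}(n/m)^{\gamma p}/p\gtrsim n$ (using $\gamma p<1$ for non-integer $\rho$; the integer case gives $n\log n$, even better), one obtains $|F'(t_n)|\gtrsim e^{\delta n}$ for some $\delta>0$, whence $\sum_n 1/|F'(t_n)|$ converges geometrically.

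For the identity $1/F(z)=\sum_n 1/(F'(t_n)(z-t_n))$ I would apply the standard Mittag--Leffler scheme: the difference $g(z):=1/F(z)-\sum_n 1/(F'(t_n)(z-t_n))$ is entire once the series converges uniformly on compacta (a direct consequence of $\sum 1/|F'(t_n)|<\infty$), so by Liouville it is enough to show both terms tend to $0$ along a sequence of circles $|z|=R_k:=(t_k+t_{k+1})/2\to\infty$. Uniform decay of the series on these circles follows from the summability together with the fact that $|F'(t_n)|\gtrsim e^{\delta n}$ suppresses the terms where $t_n$ is close to $z$. For $1/F$ itself, Lindel\"of's indicator formula for a canonical product with zeros on a ray gives $|F(z)|\gtrsim\exp(c|z|^\rho)$ outside a thin sector about the zero ray, and on the axis between consecutive zeros the same product computation used above yields $|F(R_k)|\gtrsim k^{-2}t_k e^{\delta k}$, which tends to infinity. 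The main obstacle is the coordinated asymptotic analysis of $\prod_{m\ne n}|E((n/m)^\gamma,p)|$ at both $z=t_n$ and $z=R_k$: the cancellation between the small $(1-(n/m)^\gamma)$ factors near the diagonal and the large regularization exponents $P_p(t_n/t_m)$ for $m\ll n$ must be handled uniformly in $n$, via a Stirling / Euler--Maclaurin-type calculation for non-integer powers. Once this lower bound is established, the remaining steps are routine.
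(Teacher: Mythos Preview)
Your construction has a fatal gap, and it is precisely the step you flag as routine. The assertion that ``Lindel\"of's indicator formula for a canonical product with zeros on a ray gives $|F(z)|\gtrsim\exp(c|z|^\rho)$ outside a thin sector about the zero ray'' is false for every non-integer $\rho>1$. For zeros on $\mathbb{R}^+$ with $n(r)\sim cr^\rho$ the indicator is $h(\theta)=\dfrac{\pi c}{\sin\pi\rho}\cos\rho(\pi-\theta)$, which changes sign on $(0,2\pi)$ whenever $\rho>1$. Concretely, for $1<\rho<2$ (so $p=1$) and $z=-r$ one has
\[
F(-r)=\prod_{n\ge1}\Bigl(1+\tfrac{r}{t_n}\Bigr)e^{-r/t_n},
\]
and every factor $(1+x)e^{-x}$ is $\le1$; a short computation gives $\log F(-r)\asymp -r^\rho$. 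Thus $1/F(-r)\to\infty$, while the Cauchy series $\sum_n 1/(F'(t_n)(z-t_n))$ tends to $0$ on the negative axis (all $t_n>0$, so $|z-t_n|\ge|z|$). Hence the entire function $g(z)=1/F(z)-\sum_n 1/(F'(t_n)(z-t_n))$ is unbounded on $\mathbb{R}^-$, and the Krein expansion simply fails for your $F$. The same phenomenon occurs for every $p\ge1$ in suitable directions.

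There is a second gap: the bound $|F'(t_n)|\gtrsim e^{\delta n}$ does not hold for all $\gamma\in(0,1)$. Carrying out the Euler--Maclaurin computation you sketch (replacing the sum over $m\ne n$ by $n\int_0^\infty\log|E(s^{-\gamma},p)|\,ds$ and evaluating the integral via $\mathrm{p.v.}\int_0^\infty w^{a-1}(1-w)^{-1}\,dw=\pi\cot\pi a$) yields
\[
\log|F'(t_n)|=(\pi\cot\pi\rho)\,n+O(\log n).
\]
For $\rho\in[p+\tfrac12,\,p+1)$ the coefficient $\pi\cot\pi\rho$ is $\le0$, so $|F'(t_n)|$ is bounded or decays and $\sum_n|F'(t_n)|^{-1}=\infty$.

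Neither defect is repairable within a single-ray construction; the zeros \emph{must} genuinely fill the angle. The paper's proof is entirely different: it sets $g(w)=w^{-4/\gamma}\sin^4(w^{1/\gamma})$ on the sector $D=\{0<\arg w<\pi\gamma\}$ and defines $F$ as the Cauchy integral of $g$ over $\partial D$, then shows $F$ extends to an entire function with $F(z)=g(z)+\alpha z^{-1}+o(z^{-1})$ inside $D$ and $F(z)=\alpha z^{-1}+o(z^{-1})$ outside. This forces $|F|$ to be bounded below outside $D$ (so $1/F$ is controlled there), and the zeros of $F$, located via Rouch\'e near the solutions of $\sin^4\zeta+\alpha\zeta^{4-\gamma}=0$, drift logarithmically off the axis and spread across the full angle. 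That spreading is exactly what keeps the indicator from going negative and makes the partial-fraction identity hold.
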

\medskip

We use Theorem \ref{main1} to establish Ordering Theorems for 
the Cauchy--de Branges
spaces $\mathcal{H}(T,A,\mu)$. In fact, we consider a more general
and, in a sense, more natural class of subspaces: these are 
{\it nearly invariant} (or {\it division-invariant}) subspaces. 
A closed subspace $\mathcal{H}_0$ of a Cauchy--de Branges space $\mathcal{H}$
is said to be {\it nearly invariant} if there is $w_0\in \mathbb{C}$ 
such that $\frac{f(z)}{z-w_0} \in \mathcal{H}_0$ whenever $f\in \mathcal{H}_0$
and $f(w_0) =0$. It is known that this property is equivalent
to a stronger {\it division invariance property}: for any $w\in \mathbb{C}$
such that  there exists $f\in \mathcal{H}_0$ with $f(w)\ne 0$
($w$ is not a {\it common zero for $\mathcal{H}_0$}),
$$
f\in \mathcal{H}_0, \ \ f(w) = 0 \ \Longrightarrow \ 
\frac{f(z)}{z-w}\in \mathcal{H}_0.
$$
In the context of Hardy spaces in general domains the equivalence
of nearly invariance and division invariance is shown 
in \cite[Proposition 5.1]{ar}; a similar argument works 
for general spaces of analytic functions (see Proposition \ref{nea} below).

While the de Branges theory guarantees
a rich structure of de Branges subspaces in a de Branges space, it is not clear whether
there always exist many subspaces of $\mathcal{H}(T,A,\mu)$ which have a Riesz basis 
of reproducing kernels (i.e., are Cauchy--de Branges spaces themselves). 
However, there exist many nearly invariant
subspaces. A natural construction of a nearly invariant subspace 
is as follows. Given a function $G\in \mathcal{H}(T,A,\mu)$, 
consider the subspace of $\mathcal{H}(T,A,\mu)$ defined as 
\begin{equation}
\label{rs}
\mathcal{H}_G = \overline{\rm Span}\, \Big\{\frac{G}{z-\lambda}: G(\lambda) = 0 \Big\}.
\end{equation}
We can also define $\mathcal{H}_G$ in a slightly more general situation 
when $G$ possibly is not in $\mathcal{H}(T,A,\mu)$, but 
$\frac{G}{z-\lambda} \in \mathcal{H}(T,A,\mu)$ whenever $G(\lambda) = 0$.
It is easy to see that if $G$ has simple zeros then $\mathcal{H}_G$ 
is nearly invariant (and, thus, division-invariant). 
Clearly, any subspace $\mathcal{H}$ which is itself a Cauchy--de~Branges space 
is of the form \eqref{rs}
(indeed, if $\mathcal{H} = \mathcal{H}(T_1, A_1, \mu_1)$, 
then $\mathcal{H} = \mathcal{H}_{A_1}$). We do not know at present whether
any division-invariant subspace of $\mathcal{H}(T,A,\mu)$ is of the form 
$\mathcal{H}_G$.

We will consider mostly nearly invariant subspaces $\mathcal{H}_0$ 
{\it without common zeros}, that is, such that $\mathcal{Z}(\mathcal{H}_0) 
=\emptyset$, where $\mathcal{Z}(\mathcal{H}_0)=
\{w\in \co:\, f(w) = 0\ \text{for any}\ f\in \mathcal{H}_0\}$.
Note that subspaces of the form $\mathcal{H}_G$ 
have no common zeros when zeros of $G$ are simple. 

Now we formulate two theorems which extend  
de Branges' Ordering Theorem to Cauchy--de Branges spaces.

\begin{theorem}
\label{main2}
Let $T$ satisfy one of the conditions $\mathbf{Z}$ or $\mathbf{A_\gamma}$ 
and let $\mathcal{H}_1$ and $\mathcal{H}_2$ be two 
nearly invariant subspaces of $\mathcal{H}(T,A,\mu)$ 
without common zeros. 
Then either $\mathcal{H}_1 \subset \mathcal{H}_2$ or 
$\mathcal{H}_2 \subset \mathcal{H}_1$.
\end{theorem}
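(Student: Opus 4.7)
The plan is to argue by contradiction: assume both inclusions fail, and pick $f_i\in\mathcal{H}_i\setminus\mathcal{H}_{3-i}$ for $i=1,2$. Since $\mathcal{Z}(\mathcal{H}_j)=\emptyset$ for $j=1,2$, Proposition~\ref{nea} upgrades near-invariance of each $\mathcal{H}_j$ to full division-invariance. Every $f\in\mathcal{H}(T,A,\mu)$ has the canonical form $f(z)=A(z)\sum_n a_n\mu_n^{1/2}/(z-t_n)$ with $\{a_n\}\in\ell^2$, so $f/A$ is a (regularized) Cauchy transform with poles in $T$. Consequently, for any $g\in\mathcal{H}_1$ and $h\in\mathcal{H}_2$, the meromorphic function $g/h$ is a ratio $\mathcal{C}_{a,k}/\mathcal{C}_{b,m}$ of exactly the type to which Theorem~\ref{main1} applies, provided it is entire.

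The crux is to construct from $f_1,f_2$ a non-constant pair $(g,h)\in\mathcal{H}_1\times\mathcal{H}_2$ for which $F:=g/h$ is entire. The tools are division-invariance and the absence of common zeros: given a zero $\lambda$ of $h$ at which $g(\lambda)\neq 0$, there exists $g_\lambda\in\mathcal{H}_1$ with $g_\lambda(\lambda)\neq 0$, so $g-\frac{g(\lambda)}{g_\lambda(\lambda)}g_\lambda\in\mathcal{H}_1$ vanishes at $\lambda$ and may be divided by $(z-\lambda)$ inside $\mathcal{H}_1$. Symmetric corrections eliminate poles coming from zeros of $g$ unmatched by $h$. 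Iterating these corrections over all ``unbalanced'' zeros — with norm control in $\mathcal{H}(T,A,\mu)$ and a normal-families extraction for the limiting pair — would yield the desired $(g,h)$, while arranging the corrections so that $g\in\mathcal{H}_1\setminus\mathcal{H}_2$ and $h\in\mathcal{H}_2\setminus\mathcal{H}_1$ are preserved.

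Once $F$ is entire and non-constant, Theorem~\ref{main1} gives in cases $\mathbf{Z}$ and $\mathbf{A_\gamma}$ that $F$ has zero exponential type. Reading $g=F\cdot h$ and exploiting that $g/A,\,h/A$ are bounded Cauchy transforms on circles avoiding $T$, combined with the controlled growth of $A$ (zero exponential type in $\mathbf{Z}$, order strictly less than $\gamma^{-1}$ in $\mathbf{A_\gamma}$), a Phragm\'en--Lindel\"of-type argument should force $F$ to be a polynomial. Division-invariance of $\mathcal{H}_1$ applied to the finitely many zeros of $F$ (as zeros of $g=Fh$) then yields $h\in\mathcal{H}_1$, hence $h\in\mathcal{H}_1\cap\mathcal{H}_2$, and a symmetric argument gives $g\in\mathcal{H}_2$, contradicting $g\in\mathcal{H}_1\setminus\mathcal{H}_2$.

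The chief obstacle is the iterative construction of the entire-ratio pair: the zero sets of $g$ and $h$ are typically infinite and interleaving, so one must organise infinitely many corrections in a way that preserves both memberships ``$g\notin\mathcal{H}_2$'' and ``$h\notin\mathcal{H}_1$'' and converges in the Hilbert norm. A cleaner route, which I would try first, bypasses explicit iteration: consider the orthogonal projections $P_j:\mathcal{H}(T,A,\mu)\to\mathcal{H}_j$ and work with $(I-P_2)f_1\in\mathcal{H}_2^{\perp}$, translating the ordering problem into the compatibility of the orthogonal complements with the Cauchy-transform scale on $T$. Either route ultimately reduces the theorem to an application of Theorem~\ref{main1}.
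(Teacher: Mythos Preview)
Your approach diverges substantially from the paper's, and the divergence occurs precisely at the step you flag as ``the chief obstacle.'' The paper never attempts to build an entire ratio $g/h$ with $g\in\mathcal{H}_1$, $h\in\mathcal{H}_2$. Instead it follows de~Branges' original device: pick $F_1\perp\mathcal{H}_2$ not orthogonal to $\mathcal{H}_1$ and $F_2\perp\mathcal{H}_1$ not orthogonal to $\mathcal{H}_2$, and form the scalar functions
\[
f(w)=\Big\langle \frac{F-\frac{F(w)}{G(w)}G}{z-w},\,F_1\Big\rangle,\qquad
g(w)=\Big\langle \frac{G-\frac{G(w)}{F(w)}F}{z-w},\,F_2\Big\rangle,
\]
for $F\in\mathcal{H}_1$, $G\in\mathcal{H}_2$. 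Division-invariance makes $f$ and $g$ entire \emph{automatically}, and Corollary~\ref{wer} (the extension of Theorem~\ref{main1} to $\mathcal{C}+\mathcal{C}\cdot\mathcal{C}/\mathcal{C}$) forces both to have zero exponential type. The decisive ingredient is then de~Branges' Lemma~8: from $\min(|f(w)|,|g(w)|)\lesssim |w|^M$ one concludes that one of $f,g$ is a polynomial, and Lemma~\ref{verd} together with Theorem~\ref{dens} shows it is in fact zero, yielding the contradiction.

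Your sketch has two genuine gaps. First, the iterative ``zero-matching'' construction is the entire content of the argument as you have framed it, and you have not carried it out; arranging infinitely many corrections while simultaneously preserving $g\notin\mathcal{H}_2$, $h\notin\mathcal{H}_1$, and norm convergence is not at all routine, and I see no mechanism that guarantees it. Second, even granting an entire non-constant $F=g/h$ of zero exponential type, your claim that a Phragm\'en--Lindel\"of argument forces $F$ to be a polynomial is unjustified: zero exponential type is far from polynomial growth (think of any entire function of order less than $1$). The paper gets polynomial decay not for $F$ itself but for the auxiliary $f,g$, and only via the deep two-function lemma of de~Branges applied to the pointwise minimum. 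Your alternative route through orthogonal projections $(I-P_2)f_1$ is closer in spirit to what the paper actually does, but you have not developed it; the de~Branges functionals $f(w),g(w)$ are precisely the right way to encode that projection idea.
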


To state a similar result for the strip case $\mathbf{\Pi}$ 
we need to impose some conditions.
Otherwise the statement is no longer true even in the case of real zeros.

\begin{theorem}
\label{main3}       
Let $T \subset \{-h \le \ima z\le h\}$, $h>0$, 
and let $\mathcal{H}_1$ and $\mathcal{H}_2$ be two 
nearly invariant subspaces of $\mathcal{H}(T,A,\mu)$ without common zeros. 
Assume, moreover, that $\mathcal{H}_1$ and $\mathcal{H}_2$ 
are closed under the $*$-transform $f\mapsto f^*$, 
where $f^*(z) = \overline{f(\overline z)}$.
Then either $\mathcal{H}_1 \subset \mathcal{H}_2$ or 
$\mathcal{H}_2 \subset \mathcal{H}_1$.
\end{theorem}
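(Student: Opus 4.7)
The plan is to mimic the proof of Theorem \ref{main2}, invoking case $\mathbf{\Pi}$ of Theorem \ref{main1}; the $*$-invariance hypothesis compensates for the weaker conclusion (Cartwright class with respect to some horizontal line, rather than zero exponential type) that the strip case offers.

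First I will reduce to $*$-invariant generators. Assume for contradiction that neither $\mathcal{H}_1\subset\mathcal{H}_2$ nor $\mathcal{H}_2\subset\mathcal{H}_1$, and pick $f\in\mathcal{H}_1\setminus\mathcal{H}_2$. Since $\mathcal{H}_2$ is closed under the $*$-transform, at least one of $f+f^*$ and $i(f-f^*)$ lies outside $\mathcal{H}_2$ (otherwise $f$ itself would be in $\mathcal{H}_2$). This produces a real-on-$\RR$ element $u_1=u_1^*\in\mathcal{H}_1\setminus\mathcal{H}_2$; the symmetric argument yields $u_2=u_2^*\in\mathcal{H}_2\setminus\mathcal{H}_1$.

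Second, I will carry out the construction from the proof of Theorem \ref{main2}: using the division property inherent in near-invariance and the absence of common zeros in each of $\mathcal{H}_1$ and $\mathcal{H}_2$, pass to $*$-invariant functions $v_1\in\mathcal{H}_1$, $v_2\in\mathcal{H}_2$ for which the quotient $F:=v_1/v_2$ is \emph{entire}. Since $v_i=A\cdot\mathcal{C}_{a_i,k_i}$ with poles in $T\subset\{|\ima z|\le h\}$, case $\mathbf{\Pi}$ of Theorem \ref{main1} applies: $F$ is of finite exponential type and Cartwright class with respect to some horizontal line. Moreover, $F$ is a ratio of functions of bounded type separately in $\cp$ and $\cm$, so it is itself of bounded type in both half-planes; by the classical Krein theorem it is Cartwright class with respect to $\RR$ and $\mathrm{type}(F)=\max({\rm mt}_+(F),{\rm mt}_-(F))$. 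The identity $F=F^*$ then forces ${\rm mt}_+(F)={\rm mt}_-(F)$, so the two mean types coincide.

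Finally, the $\ell^2$ Cauchy-transform structure of $v_1,v_2$ pins down their mean types on horizontal lines (each equal to the corresponding mean type of $A$, since Cauchy transforms of finite measures have mean type zero on both half-planes), so that the common value ${\rm mt}_\pm(F)$ must vanish. Combined with $F$ being entire, Cartwright class, and $*$-invariant, this rigidity allows one to contradict the assumed non-inclusion by the same kind of argument used to close the proof of Theorem \ref{main2}, locating $u_1$ in $\mathcal{H}_2$ (or $u_2$ in $\mathcal{H}_1$) via the division property. The principal obstacle is the second step, i.e.\ constructing $v_1,v_2$ with entire quotient while preserving $*$-invariance; this parallels the corresponding construction for Theorem \ref{main2} and is where the technical heart of the proof lies. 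The novel ingredient specific to Theorem \ref{main3} is the use of $*$-invariance to convert ``Cartwright class with respect to some line'' into genuine Cartwright class with matching mean types on the two half-planes, which supplies exactly the cancellation the strip case requires.
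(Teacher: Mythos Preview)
Your proposal rests on a misreading of the proof of Theorem~\ref{main2}. That proof does \emph{not} construct elements $v_1\in\mathcal{H}_1$, $v_2\in\mathcal{H}_2$ with entire quotient $v_1/v_2$; no such construction appears in the paper, and there is no evident way to carry one out. Instead the paper follows de~Branges' method: assuming neither inclusion holds, one chooses $F_1\perp\mathcal{H}_2$ (but not $\perp\mathcal{H}_1$) and $F_2\perp\mathcal{H}_1$ (but not $\perp\mathcal{H}_2$), and for \emph{arbitrary} $F\in\mathcal{H}_1$, $G\in\mathcal{H}_2$ defines
\[
f(w)=\Big\langle \frac{F-\frac{F(w)}{G(w)}G}{z-w},\,F_1\Big\rangle,\qquad
g(w)=\Big\langle \frac{G-\frac{G(w)}{F(w)}F}{z-w},\,F_2\Big\rangle.
\]
These are automatically entire (the apparent singularities at zeros of $G$, resp.\ $F$, disappear because $f$ is independent of the choice of $G$ and $g$ of $F$) and lie in $\mathcal{C}+\mathcal{C}\cdot\mathcal{C}/\mathcal{C}$, so Corollary~\ref{wer} yields finite exponential type in the strip case. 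The rest of the argument forces $f$ or $g$ to vanish identically and then reaches a contradiction via Lemma~\ref{verd} and Theorem~\ref{dens}. Your ``second step'' thus has no counterpart to parallel, and without it your outline does not get off the ground.

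The $*$-invariance is used in the paper's proof of Theorem~\ref{main3} not by symmetrizing generators but by exploiting the freedom in the choice of $F,G$: since $\mathcal{H}_1,\mathcal{H}_2$ are $*$-closed, one may also use $F^*,G^*$ in the formula for $f$, which gives
\[
|f(w)|\lesssim\Big(1+\min\Big\{\Big|\tfrac{F(w)}{G(w)}\Big|,\Big|\tfrac{F(\bar w)}{G(\bar w)}\Big|\Big\}\Big)\frac{1}{|\ima w|},\qquad |\ima w|\ge 2h.
\]
Writing the bounded-type factorizations of $F/G$ in $\CC^++ih$ and in $\CC^--ih$ with mean types $a$ and $b$, one finds: if $a\ge 0$ or $b\ge 0$ the minimum above is subexponential off a set of arguments of arbitrarily small measure, so Phragm\'en--Lindel\"of forces $f$ to have zero type; if $a<0$ and $b<0$ the analogous estimate for $g$ gives $g\to 0$ off small angles, hence $g\equiv 0$. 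Either way one lands in Step~3 or Step~4 of the proof of Theorem~\ref{main2}. Your observation that Cauchy transforms have zero mean type is correct, but it would apply to a single entire quotient $v_1/v_2$, not to the ratio $F/G$ of generic elements that actually appears; the mean types of $F/G$ in the two half-planes need not agree, and it is precisely the $*$-closure hypothesis (via the minimum above) that circumvents this.
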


In other words, in the above cases the set of all nearly invariant subspaces
without common zeros (and, in particular, the set of all Cauchy--de Branges subspaces)
of $\mathcal{H}(T,A,\mu)$ is totally ordered by inclusion.
However, without any restrictions on the growth or spectrum localization
the ordered structure for nearly invariant subspaces fails.
This is illustrated by the following
                         
\begin{theorem}
\label{main4}
There exists a space $\mathcal{H}(T,A,\mu)$ of order $2$ 
and two nearly invariant and $*$-closed subspaces
$\mathcal{H}_1$ and $\mathcal{H}_2$ without common zeros such that
neither $\mathcal{H}_1 \subset \mathcal{H}_2$ nor 
$\mathcal{H}_2 \subset \mathcal{H}_1$.
Moreover, these subspaces can be chosen to be of the form \eqref{rs}.
\end{theorem}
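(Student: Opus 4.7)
The plan is to build a concrete counterexample, exploiting the two-dimensional freedom of indicator diagrams at order $2$ to place the subspaces so that no inclusion can hold. Concretely, I would set $T_1 := \{\pm\sqrt{n} : n \ge 1\} \subset \RR$ and $T_2 := \{\pm i\sqrt{n} : n \ge 1\} \subset i\RR$; the union $T := T_1 \cup T_2$ is $*$-symmetric (since $T_2$ is closed under $z \mapsto \bar z$) and has convergence exponent $2$. Let $A_1, A_2$ be the canonical order-$2$ products with zero sets $T_1, T_2$ and put $A := A_1 A_2$; the asymptotics $\log|A_j(z)| \sim (-1)^{j-1} 2r^2 \cos 2\theta \log r$ cancel in the product, so $A$ is of order exactly $2$. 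Both $A_j$ are real on $\RR$ (so $A_j^*=A_j$), and $A_1(iz) = A_2(z)$. Define $G_j := A_j$ and $\mathcal{H}_j := \mathcal{H}_{G_j}$ as in \eqref{rs}: each $\mathcal{H}_j$ is nearly invariant, $*$-closed, and has no common zeros since $A_j$ has only simple zeros.

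The next step is to choose a $*$-symmetric weight sequence $\mu$ on $T$ so that $G_j(z)/(z-\lambda) \in \mathcal{H}(T,A,\mu)$ for every zero $\lambda$ of $G_j$ and $j = 1,2$. The partial-fraction identity
\[
\frac{A_1(z)}{(z-\lambda)\,A(z)} \;=\; \frac{1}{A_2(z)(z-\lambda)} \;=\; \frac{1}{A_2(\lambda)(z-\lambda)} + \sum_{t \in T_2} \frac{1}{A_2'(t)(t-\lambda)(z-t)},
\]
valid for $\lambda \in T_1$, and its symmetric counterpart for $T_2$, translates the membership condition into an $\ell^2$-constraint on $\mu$. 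Taking roughly $\mu_t \asymp |A_{3-j}'(t)|^{-2}$ on $T_j$ (with mild slowly growing corrections to absorb the super-exponential decay of $A_{3-j}$ along $T_j$) makes both $\mathcal{H}_1, \mathcal{H}_2$ sit inside $\mathcal{H}(T,A,\mu)$.

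The core step is to prove $\mathcal{H}_1 \not\subset \mathcal{H}_2$; the reverse inclusion follows by symmetry. Suppose $A_1(z)/(z-\lambda) \in \mathcal{H}_2$ for some $\lambda \in T_1$, and let $s_n = \sum_j c_j^{(n)}\, A_2(z)/(z-\lambda'_j)$ approximate it in $\mathcal{H}$-norm. Expanding each $s_n$ in the $A(z)\cdot(\text{Cauchy transform})$ form via partial fractions of $1/(A_1(z)(z-\lambda'_j))$ shows that its coefficient at $t \in T_1$ equals $\phi_n(t)/[A_1'(t)\mu_t^{1/2}]$, where $\phi_n(z) := \sum_j c_j^{(n)}/(z - \lambda'_j)$ is a rational function with poles in $T_2$. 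On the other hand, the coefficient of $A_1(z)/(z-\lambda)$ at $t \in T_1 \setminus \{\lambda\}$ vanishes. Norm convergence then forces $\phi_n \to 0$ in the weighted $\ell^2$-sense on $T_1 \setminus \{\lambda\}$, while the $T_2$-matching constrains $\phi_n$ to approximate the meromorphic function $A_1(z)/[(z-\lambda)A_2(z)]$. Because $|1/A_2|$ grows super-exponentially on $\RR$ (roughly like $e^{2x^2 \log x}$), this meromorphic target has no representation as an $\ell^2$-weighted Cauchy transform with poles solely in $T_2$; its residues at $T_2$ carry too much ``mass'' to be compatible with simultaneous $\ell^2$-smallness at $T_1$. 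This is the required contradiction.

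The main obstacle is precisely this quantitative $\ell^2$-incompatibility. Making it rigorous requires careful asymptotic estimates for $|A_j'(t)|$ at $t$ on the opposite axis (obtained via the duality $A_1(iz) = A_2(z)$ and standard canonical-product asymptotics), together with a sharp lower bound on the best approximation of $A_1/[(z-\lambda)A_2]$ by finite Cauchy transforms on $T_2$ when tested against the $T_1$-residues. Should the direct construction with $G_j = A_j$ fail to separate the subspaces sharply enough, a fallback is to replace $G_j$ by $A_j \cdot q_j$ for auxiliary low-order factors $q_j$ chosen to enforce the separation, or to move $T_1, T_2$ onto transversal rays other than $\RR$ and $i\RR$ where the growth disparity is more tractable.
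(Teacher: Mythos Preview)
Your construction has a fatal gap at the very first step: the partial-fraction identity you invoke,
\[
\frac{1}{A_2(z)(z-\lambda)} \;=\; \frac{1}{A_2(\lambda)(z-\lambda)} + \sum_{t \in T_2} \frac{1}{A_2'(t)(t-\lambda)(z-t)},
\]
is \emph{false} for your choice of $A_2$. With $T_1 = \{\pm\sqrt{n}\}$ and $T_2 = \{\pm i\sqrt{n}\}$, the canonical product $A_2$ (of genus~2) is $A_2(z) = e^{-\gamma z^2}/\Gamma(1+z^2)$, which decays like $\exp(-2x^2\log x)$ along the real axis. Hence the left-hand side above blows up super-exponentially as $z = x \to \infty$ in $\RR$, while the right-hand side is bounded there (the coefficients $1/[(t-\lambda)A_2'(t)]$ are summable and the Cauchy sum is $O(1)$ away from $T_2$). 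The difference of the two sides is therefore a nonzero entire function, not zero. Consequently $A_1/(z-\lambda)$ admits \emph{no} representation of the form $A(z)\sum_n a_n\mu_n^{1/2}/(z-t_n)$ with $\{a_n\}\in\ell^2$, regardless of the choice of $\mu$: the function is simply not in $\mathcal{H}(T,A,\mu)$, and $\mathcal{H}_{G_1}$ is not a subspace of the ambient space. (This is exactly the phenomenon behind the paper's Theorem~\ref{count_a} and Example~\ref{ex1}: the Krein expansion of $1/F$ can fail once the order/geometry constraints of Theorem~\ref{main1} are violated.)

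The paper's construction avoids this obstruction by a different mechanism. It takes $T$ to be a full lattice $\{k+im\}$ and $A = A_1A_2$ with $A_1, A_2$ built from lacunary products in $e^{\pm 2\pi z}$; the crucial feature is that $|A_1|\asymp 1$ on $\{\rea z\le 0\}$ and $|A_2|\asymp 1$ on $\{\rea z\ge 0\}$, so the relevant Krein-type interpolation formulas \emph{do} hold and the generators $G_j/(z-\lambda)$ genuinely lie in $\mathcal{H}(T,A,\mu)$. Non-inclusion is then proved not by an approximation/$\ell^2$-incompatibility argument as you propose, but by exhibiting an explicit $f\in\mathcal{H}$ orthogonal to all of $\mathcal{H}_{G_1}$ yet not orthogonal to $\mathcal{H}_{G_2}$; the orthogonality is checked via an interpolation identity $\sum_{t\in\mathcal{Z}_{A_2}} G_1(t)/[A_2'(t)(z-t)] = G_1(z)/A_2(z)$, which again relies on the boundedness of $A_j$ on half-planes. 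Your ``two-axes'' geometry cannot support such identities because the canonical products are far too asymmetric in growth.
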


The paper is organized as follows. In Section \ref{prel} we discuss 
our main tools from function theory. Theorem \ref{main1} is proved 
in Section \ref{proofth1}, while in Section \ref{count0} counterexamples 
are given illustrating its sharpness.
Ordering theorems \ref{main3} and \ref{main4} are proved in 
Section \ref{proofth23}. Construction of two nearly invariant subspaces
which do not contain each other is presented in Section \ref{count}.
\bigskip


\section{Preliminaries}
\label{prel}

In what follows we write $U(x)\lesssim V(x)$ if 
there is a constant $C$ such that $U(x)\leq CV(x)$ holds for all $x$ 
in the set in question. We write $U(x)\asymp V(x)$ if both $U(x)\lesssim V(x)$ and
$V(x)\lesssim U(x)$. The standard Landau notations
$O$ and $o$ also will be used.

For the basic notions (such as order  and  type) of entire function theory see, e.g., \cite{lev1, lev}.
The order of an entire function $f$ will be denoted 
by $\rho(f)$ and its zero set by $\mathcal{Z}_f$. 
We denote by $D(z,R)$ the disc with center $z$ of radius $R$. 
The symbol $m_2$ will denote the area Lebesgue measure in $\CC$, while, for a measurable
set $E\subset \RR$, we denote its one-dimensional Lebesgue measure by $|E|$.

\subsection{Functions of bounded type.} In this subsection 
we recall some definitions and basic facts about
functions of bounded type.

Denote by $H^p= H^p(\cp)$, $1\le p\le \infty$, 
the standard Hardy spaces in the upper half-plane. 
For the inner-outer factorization and other basic properties of
the Hardy spaces see, e.g., \cite{ko}. Recall that if $m$ is a non-negative 
function on  $\RR$ such that $\log m \in L^1\big(\frac{dt}{t^2+1}\big)$, then we can define 
the {\it outer function} $O_m$ as
$$
O_m(z) = \exp\bigg(\frac{1}{2\pi i} \int_\RR \bigg(
\frac{1}{t-z}- \frac{t}{t^2+1}\bigg)\log m(t)\, dt \bigg).
$$
We will use the following well-known estimates for outer functions. 
By a very rough estimate $\frac{y}{(t-x)^2+y^2} \lesssim 
\big(y+ \frac{x^2+1}{y}\big)\frac{1}{t^2+1}$
we have for $z =x+iy = r e^{i\theta}\in \CC^+$, $r\ge 1$,
\begin{equation}
\label{out}
\big| \log|O_m(z)| \big| \le \frac{y}{\pi}\int_\RR \frac{|\log m(t)|}{(t-x)^2+y^2} \lesssim
y+ \frac{x^2+1}{y}\lesssim \frac{r}{\sin\theta}. 
\end{equation}                        
In particular, for any $\delta>0$,           
\begin{equation}
\label{out1}
\big|\log|O_m(z)|\big| \lesssim |z|, \qquad \delta<\arg  z<\pi-\delta, \ \ |z|\ge 1. 
\end{equation}

A function $f$ analytic in $\CC^+$ is said to be of {\it bounded type},
if $f=g/h$ for some functions $g$, $h\in H^\infty$. If,
moreover, $h$ can be taken to be outer, we say that $f$ is in
the \textit{Smirnov class $\mathcal{N}_+ = \mathcal{N}_+(\CC^+)$}. 
Analogously, we can define functions of bounded type and Smirnov class functions 
in any given half-plane.

If $f$ is a function of bounded type 
in $\CC^+$, it has the canonical factorization $f = OB S_1/S_2$, 
where $O$ is the outer factor for $f$, $B$ is a Blaschke product and $S_1, S_2$ 
are some (mutually prime) singular inner functions. We define the mean type of $f$ as
$$
{\rm mt}(f) = \limsup\limits_{y\to \infty}\frac{\log|f(iy)|}{y}.
$$
The mean type is equal to $a$ if and only if there is a  
factor $e^{-iaz}$ in the canonical factorization of $f$. If we assume additionally that
$f$ is continuous up to $\mathbb{R}$ then the singular inner functions can not have 
singularities on $\RR$ and so $S_1/S_2= e^{iaz}$ for some $a\in\RR$. Thus, in this case
$f \in \mathcal{N}_+(\CC^+)$ if and only if ${\rm mt}(f) \le 0$. 

Estimate \eqref{out1}, a similar estimate for the singular factor
and the Hayman theorem  \cite[Lecture 15]{lev}
which gives an estimate from below for the Blaschke product outside a union 
of angles of arbitrarily small total size imply the following estimates: 

\begin{lemma}
\label{boun0}
If $f$ is a function of bounded type in $\CC^+$ and ${\rm mt}(f) = a$, 
then, for any $\vep, \delta>0$, there exists $R>0$ such that
$$
\log |f(z)| \le (a \sin \delta + \vep )|z|, \qquad 
\delta<\arg  z<\pi-\delta, \ \ |z|\ge R. 
$$
More generally, if $f = O\frac{B_1 S_1}{B_2 S_2}e^{iaz}$ where
$O$ is the outer factor, $B_1$ and $B_2$ Blaschke products, $S_1, S_2$  
singular inner functions and $a = {\rm mt}(f)$, then
for any $\vep, \delta>0$, there exist $R$ and a set $E\subset [\delta, \pi-\delta]$ 
which is a union of intervals of total length less than $\vep$ such that
$$
(a \sin \delta - \vep )|z| \le \log |f(z)| \le (a \sin \delta + \vep ) |z|, \qquad 
\arg  z \notin E, \ \ |z|\ge R. 
$$
\end{lemma}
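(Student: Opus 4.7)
The plan is to apply the canonical Nevanlinna factorization of $f$ in the upper half-plane and to estimate each factor separately. Write $f = c\,B\cdot(S_1/S_2)\cdot O\cdot e^{-iaz}$, where $c$ is a unimodular constant, $B$ is the Blaschke product of the zeros of $f$ in $\CC^+$, $S_1,S_2$ are mutually singular inner functions, $O$ is the outer factor, and any atom at infinity of the singular measures has been absorbed into the exponential, so that $a={\rm mt}(f)$. All three inner factors satisfy $|B|,|S_1|,|S_2|\le 1$ on $\CC^+$, while $\log|e^{-iaz}|=a\,{\rm Im}\,z=a|z|\sin\theta$ is the main term.

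The technical heart is a uniform smallness estimate for the remaining factors: for every $\vep>0$ there should exist $R$ such that $|\log|O(z)||\le\vep|z|$ and $|\log|S_j(z)||\le\vep|z|$ whenever $|z|\ge R$ and $\arg z\in[\delta,\pi-\delta]$. Starting from the Poisson representation $\log|O(z)| = \frac{y}{\pi}\int_{\RR}\frac{\log m(t)\,dt}{(t-x)^2+y^2}$ and its analog for $-\log|S_j(z)|$ against $d\sigma_j$, I would split the integration at $|t|=M$. On $|t|\le M$, the crude bound $y/((t-x)^2+y^2)\le 1/y\le 1/(|z|\sin\delta)$ gives a contribution of order $C_M/|z|$; on $|t|>M$, the estimate that led to \eqref{out1}, namely $y(t^2+1)/((t-x)^2+y^2)\lesssim y+(x^2+1)/y\lesssim |z|/\sin\delta$, reduces the contribution to $(|z|/\sin\delta)\cdot\int_{|t|>M}|\log m(t)|/(t^2+1)\,dt$, and this last integral is small for large $M$. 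First choosing $M$ large and then $R$ large produces the claimed uniform $o(|z|)$ bound for $\log|O|$; the case of $S_j$ is similar and even easier, since $\sigma_j$ is a finite measure on $\RR$.

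With these estimates in hand, the upper bound of the first statement is immediate: $\log|B|$ and $\log|S_1|$ are nonpositive, $\log|O(z)|$ and $-\log|S_2(z)|$ are $o(|z|)$ on the sector, and $\log|e^{-iaz}|=a|z|\sin\theta$, so $\log|f(z)|\le (a\sin\theta+\vep)|z|$, whence the stated bound follows. For the two-sided estimate one additionally needs a lower bound for the Blaschke product; here Hayman's theorem (\cite[Lecture 15]{lev}) provides, for any $\vep>0$, a union of small discs around the zeros of $B$, of arbitrarily small total ``angular size'', outside which $\log|B(z)|\ge -\vep|z|$. Projecting these discs onto the set of arguments yields the required exceptional set $E\subset[\delta,\pi-\delta]$ of total length less than $\vep$, and combining with the previous estimates gives the matching lower bound.

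The principal technical obstacle is precisely the uniform convergence $\log|O(z)|/|z|\to 0$ on the truncated sector; pointwise this is routine by dominated convergence, but the uniform version requires the near/far split sketched above. Once that is in place, the Hayman input for the Blaschke product is classical, and the remainder is simply bookkeeping between $\vep$, $\delta$, $R$, and the exceptional set $E$.
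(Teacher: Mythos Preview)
Your proposal is correct and follows exactly the route the paper indicates. The paper does not give a separate proof of this lemma; it merely states, in the sentence preceding it, that the result follows from the outer function estimate \eqref{out1}, a similar estimate for the singular factors, and Hayman's theorem for the Blaschke product. Your near/far split to upgrade the $O(|z|)$ bound of \eqref{out1} to a uniform $o(|z|)$ on the sector is precisely the refinement the paper's sketch needs but omits. One small remark: in the more general version with $f=O\frac{B_1S_1}{B_2S_2}e^{iaz}$, the upper bound also requires Hayman's theorem applied to $B_2$ (since $1/|B_2|\ge 1$), so the exceptional set $E$ must accommodate both Blaschke products; otherwise your argument is complete.
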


For the theory of the Cartwright class we refer to~\cite{hj, ko1, lev}.

The following lemma will be often useful. 

\begin{lemma}
\label{boun}
Let $\mathbb{H}_+$ and $\mathbb{H}_-$ be two complement half-planes and assume that 
$T\subset \mathbb{H}_-$. Then any regularized Cauchy transform $\mathcal{C}_{a, k}$
given by \eqref{reg} is a function from the Smirnov class in $\mathbb{H}_+$.
\end{lemma}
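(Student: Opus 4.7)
The plan is to absorb the polynomial regularization of $\mathcal{C}_{a,k}$ into the Smirnov algebra via a single telescoping identity, reducing matters to the classical fact that Cauchy integrals of finite complex measures supported in the complementary half-plane belong to $\mathcal{N}_+$.

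After an affine change of variable I may assume $\mathbb{H}_+ = \cp$ and $\mathbb{H}_- = \overline{\cm}$, so that $T \subset \overline{\cm}$. Starting from $\mathcal{C}_{a,k}(z) = z^k \sum_n a_n/[t_n^k(z-t_n)]$, the elementary identity $\frac{1}{t_n^k(z-t_n)} = \frac{z}{t_n^{k+1}(z-t_n)} - \frac{1}{t_n^{k+1}}$ applied term by term yields
\[
\mathcal{C}_{a,k}(z) = z^{k+1}\, G(z) - C_0\, z^k,
\]
where
\[
G(z) = \sum_n \frac{a_n/t_n^{k+1}}{z - t_n}, \qquad C_0 = \sum_n \frac{a_n}{t_n^{k+1}}.
\]
Both series converge absolutely: the constant $C_0$ directly by the hypothesis $\sum_n |a_n|/|t_n|^{k+1} < \infty$, and $G(z)$ at each $z \in \cp$ using in addition $|z-t_n| \geq \ima z > 0$ valid for $t_n \in \overline{\cm}$. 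Thus $G$ is the Cauchy transform of the finite complex Borel measure $\nu = \sum_n (a_n/t_n^{k+1})\,\delta_{t_n}$ supported in $\overline{\cm}$.

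Since $\mathcal{N}_+(\cp)$ is an algebra that contains every polynomial (one may express $z^m$ as the ratio of the bounded analytic function $z^m/(z+i)^{2m}$ over the outer bounded analytic function $1/(z+i)^{2m}$), it suffices to show $G \in \mathcal{N}_+(\cp)$. By linearity and the Jordan decomposition $\nu = \sum_{j=1}^{4} i^{j-1} \nu_j$ into positive parts, I reduce to the case $\nu \geq 0$. Then $H := -G$ maps $\cp$ into $\cp$, because
\[
\ima H(z) = \int_{\overline{\cm}} \frac{\ima z - \ima t}{|z - t|^2}\, d\nu(t) > 0 \qquad (z \in \cp).
\]
Finally, any self-map $H: \cp \to \cp$ lies in $\mathcal{N}_+(\cp)$: its Cayley transform $\phi = (H-i)/(H+i)$ is in $H^\infty(\cp)$ with $|\phi| < 1$, and inversion yields $H = i(1+\phi)/(1-\phi)$; the denominator $1 - \phi$ has positive real part (since $\rea\phi \leq |\phi| < 1$) and is bounded, so it is outer, whence $H$ is a ratio of bounded analytic functions with outer denominator.

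The main point is the last reduction — Herglotz-type self-maps of $\cp$ belong to $\mathcal{N}_+$, which in turn rests on the standard fact that $H^\infty$ functions with positive real part are outer. The affine reduction, the telescoping identity, and the algebraic manipulation in $\mathcal{N}_+$ are routine bookkeeping.
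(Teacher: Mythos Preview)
Your proof is correct and follows essentially the same route as the paper's: reduce to $\cp$, write $\mathcal{C}_{a,k}$ as a polynomial combination involving an $\ell^1$ Cauchy transform (the paper leaves this telescoping implicit in its appeal to formula \eqref{reg}), split into positive parts via Jordan decomposition, and use that analytic self-maps of $\cp$ lie in $\mathcal{N}_+$. The paper simply cites \cite{hj} for this last Herglotz fact, whereas you supply the Cayley-transform/outer-function argument explicitly.
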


\begin{proof}
Without loss of generality, let $\mathbb{H}_+ = \cp$. 
It is well known that if $f$ is analytic in $\CC^+$ and $\ima f>0$, then $
f$ is in the Smirnov class~\cite[Part 2, Ch. 1, Sect. 5]{hj}. 
Thus, if $u_n>0$ and $\sum_n u_n<\infty$, then  the
function $\sum_n \frac{u_n}{t_n-z}$ is in the Smirnov class $\mathcal{N}_+$.
Consequently, $\sum_n \frac{v_n}{t_n-z} \in \mathcal{N}_+$ 
for any $\{v_n\}\in \ell^1$. 
Finally, $f(z) = z$ also is in $\mathcal{N}_+$ and the result 
follows immediately from formula \eqref{reg}.
\end{proof}


\subsection{Estimates of Cauchy transform in the complex plane.}
The following two results from \cite{bbb-fock}
about the asymptotic behaviour of Cauchy transforms
of measures in the plane will be useful. 
We say that $\Omega\subset \CC$ is a {\it 
set of zero area density} if 
$$
\lim_{R\to\infty} \frac{m_2(\Omega \cap D(0, R))}{R^2} = 0.
$$

\begin{lemma} \cite[Proof of Lemma 4.3]{bbb-fock}
\label{verd}
Let $\nu$ be a finite complex Borel measure
in $\CC$. Then, for any $\varepsilon>0$, 
there exists a set $\Omega$ of zero area density such that
\begin{equation}
\label{meas}
\bigg|\int_\mathbb{C}\frac{d\nu(\xi)}{z-\xi} - \frac{\nu(\mathbb{C})}{z} \bigg|
< \frac{\varepsilon}{|z|}, \qquad  z\in\CC\setminus\Omega.
\end{equation}
\end{lemma}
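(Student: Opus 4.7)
The plan starts from the algebraic identity
$$
\int_\CC \frac{d\nu(\xi)}{z-\xi} - \frac{\nu(\CC)}{z} = \frac{1}{z}\int_\CC \frac{\xi}{z-\xi}\,d\nu(\xi),
$$
so the task reduces to producing a set $\Omega$ of zero area density outside which $\bigabs{\int_\CC \xi(z-\xi)^{-1} d\nu(\xi)} < \varepsilon$. My strategy is a dyadic splitting in $|z|$: on each annulus $E_j = \{2^j\le |z| < 2^{j+1}\}$ I would split $\nu = \nu_j^{\rm near} + \nu_j^{\rm far}$ with $\nu_j^{\rm near} := \nu|_{\{|\xi|\le R_j\}}$, choosing a cutoff $R_j$ that satisfies $R_j\to\infty$ yet $R_j 2^{-j}\to 0$ (take, e.g., $R_j = 2^{j/2}$).

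On the support of $\nu_j^{\rm near}$ one has $|z-\xi|\ge |z|/2$ for $z\in E_j$ and $j$ large, so the near-part contribution is controlled by the brutal bound
$$
\biggabs{\int \frac{\xi\, d\nu_j^{\rm near}(\xi)}{z-\xi}} \le \frac{2R_j |\nu|(\CC)}{2^j},
$$
which is $<\varepsilon/2$ uniformly in $z\in E_j$ for all $j$ sufficiently large. For the far part I would use the identity $\xi/(z-\xi) = -1 + z/(z-\xi)$ to rewrite
$$
\int \frac{\xi\, d\nu_j^{\rm far}(\xi)}{z-\xi} = -\nu_j^{\rm far}(\CC) + z\,\mathcal{C}_j(z), \qquad \mathcal{C}_j(z) := \int_\CC \frac{d\nu_j^{\rm far}(\xi)}{z-\xi}.
$$
Setting $\eta(R) := |\nu|(\{|\xi|>R\})$, the first term is bounded by $\eta(R_j)\to 0$, and the remaining task is a pointwise bound for $|z\,\mathcal{C}_j(z)|$ off a thin subset of $E_j$. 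Here the workhorse is the $L^1$-estimate coming from Fubini together with the uniform bound $\int_{D(z_0,\rho)} |z-\xi|^{-1} dm_2(z) \le 2\pi\rho$:
$$
\int_{|z|\le 2^{j+1}} |\mathcal{C}_j(z)|\,dm_2(z) \le 4\pi\cdot 2^j\cdot \eta(R_j),
$$
and Chebyshev's inequality then shows that the exceptional set $B_j\subset E_j$ on which $|z\,\mathcal{C}_j(z)| > \varepsilon/4$ satisfies $m_2(B_j) \lesssim_\varepsilon 2^{2j}\eta(R_j)$.

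Setting $\Omega := D(0,\rho_0) \cup \bigcup_j B_j$ and applying the triangle inequality on $\CC\setminus\Omega$ produces the stated pointwise bound. The main obstacle is to ensure that $\Omega$ has \emph{zero} (not merely small) area density: a single splitting at a fixed radius $R$ leaves a bad set of positive density proportional to $|\nu|(\{|\xi|>R\})$, which is the price paid for the $L^1$-Chebyshev step being sharp only up to the total variation of the truncated measure. The remedy is precisely the dyadic growth $R_j\to\infty$, which forces $\eta(R_j)\to 0$, and hence $m_2(B_j) = o(2^{2j})$; summing yields $m_2(\Omega\cap D(0,r)) \le O(1) + C_\varepsilon \delta'\, r^2$ for any prescribed $\delta'>0$ once $j$ exceeds a threshold depending on $\delta'$, so the area density of $\Omega$ vanishes.
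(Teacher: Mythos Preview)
Your argument is correct. The algebraic reduction, the dyadic near/far splitting with $R_j\to\infty$ but $R_j/2^j\to 0$, the rearrangement bound $\int_{D(0,\rho)}|z-\xi|^{-1}\,dm_2(z)\le 2\pi\rho$ (valid for every $\xi$ by the bathtub principle), and the $L^1$--Chebyshev step all go through as you wrote them; the tail condition $\eta(R_j)\to 0$ is exactly what forces $m_2(B_j)=o(2^{2j})$ and hence zero area density for $\Omega$.

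As for comparison: the paper does not actually prove this lemma---it is quoted from \cite[Proof of Lemma~4.3]{bbb-fock}, so there is no in-paper argument to compare against. Your proof is a clean self-contained version of the standard weak-type estimate for the planar Cauchy transform combined with a moving truncation, which is presumably close in spirit to what the cited reference does.
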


The following result from \cite{bbb-fock}, which is due to A. Borichev,  
can be considered as an extension of the classical Liouville theorem. 

\begin{theorem} \cite[Lemma 4.2]{bbb-fock}
\label{dens}
If an entire function $f$ of finite order is bounded on 
$\CC\setminus \Omega$ for some set $\Omega$ of zero area density, 
then $f$ is a constant. 
\end{theorem}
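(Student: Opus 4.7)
The plan is to use the classical Liouville theorem: show that $f$ is in fact bounded on all of $\CC$, hence constant. Finite order $\rho := \rho(f)$ provides the crude upper bound $\log|f(z)| \le |z|^{\rho+\varepsilon}$ valid for $|z|$ large and any $\varepsilon > 0$, while the hypothesis gives the sharp bound $\log|f(z)| \le \log M$ on $\CC \setminus \Omega$. The key is to combine these two pointwise bounds via the subharmonicity of $u := \log^+|f|$ and the area-smallness of $\Omega$.

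Concretely, I would first apply the sub-mean-value inequality $u(z_0) \le \frac{1}{\pi R^2} \int_{D(z_0,R)} u\, dm_2$. Splitting $D(z_0,R)$ into $D\setminus\Omega$ (where $u \le \log^+ M$) and $D\cap\Omega$ (where the crude order bound gives $u \le (|z_0|+R)^{\rho+\varepsilon}$), the first piece contributes at most $\log^+ M$ to the average, and the second at most $\frac{m_2(\Omega\cap D(z_0,R))}{\pi R^2}\cdot (|z_0|+R)^{\rho+\varepsilon}$. The zero-area-density hypothesis makes the prefactor $o(1)$ as $R\to\infty$ with $z_0$ fixed, but the $(|z_0|+R)^{\rho+\varepsilon}$ growth prevents a direct conclusion.

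To overcome this, I would pass to the circular averages $\tau(r) := \frac{1}{2\pi}\int_0^{2\pi} u(re^{i\theta})\, d\theta$ and split along the angular bad set $E_r := \{\theta : re^{i\theta} \in \Omega\}$, which yields $\tau(r) \le \log^+ M + \frac{|E_r|}{2\pi}\cdot\log M_f(r)$, where $M_f(r) := \max_{|z|=r}|f(z)|$. Integrating against $r\, dr$ and using $\int_0^R |E_r|\, r\, dr = m_2(\Omega\cap D(0,R)) = o(R^2)$, together with monotonicity of $\tau$ in $r$ and the standard estimate $\log M_f(r) \lesssim \tau(2r)$ (from Poisson--Jensen on $D(0,2r)$), one obtains an integrated inequality that bootstraps a strict reduction of the effective growth exponent. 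Iterating this improvement, or equivalently selecting a sequence of radii $R_k \to \infty$ along which $|E_{R_k}|$ is small enough to beat the growth of $M_f(R_k)$ (possible thanks to the $o(R^2)$ rather than $O(R^2)$ strength of the density assumption), eventually forces $\tau(r)$ to be bounded in $r$, hence $|f|$ bounded, and Liouville concludes.

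The main obstacle is making the iteration or the radial selection converge quantitatively: the hypothesis only ensures that $|E_r|$ is small in an averaged $L^1$ sense, whereas at first sight one needs a pointwise rate $|E_r| \lesssim r^{-\rho-\varepsilon}$ on a dense enough set of radii in order to kill the finite-order growth factor in a single step. Exploiting the gap between the $o(R^2)$ density bound and the worst-case scenario, either through an iterative improvement scheme or through a careful two-constants (harmonic measure) argument on discs of well-chosen radii, is the technical heart of the proof.
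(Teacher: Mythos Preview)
The paper does not prove this statement: Theorem~\ref{dens} is quoted from \cite[Lemma~4.2]{bbb-fock} and used as a black box, so there is no in-paper argument to compare your proposal against. I will therefore comment only on the soundness of your outline.

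Your overall strategy---average $u=\log^+|f|$ over discs, split into the part on $\CC\setminus\Omega$ (where $u\le\log^+M$) and the part on $\Omega$ (where only the finite-order bound is available), and then iterate---is correct and can be closed. The ``main obstacle'' you flag is not a real obstruction once the recursion is written cleanly. Setting $\phi(R)=\frac{1}{\pi R^2}\int_{D(0,R)}u\,dm_2$, the splitting gives
\[
\phi(R)\le \log^+ M+\delta(R)\,\log M_f(R),\qquad \delta(R):=\frac{m_2(\Omega\cap D(0,R))}{\pi R^2}\to 0.
\]
The elementary inequalities $\log M_f(R)\le 3\tau(2R)$ and $\tau(r)\le\tfrac{4}{3}\phi(2r)$ (both valid for nonnegative subharmonic $u$) combine to $\log M_f(R)\le 4\phi(4R)$, so $\phi(R)\le C+4\delta(R)\,\phi(4R)$. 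Fix $N$ with $\phi(r)\le r^N$ for all large $r$ (this is where finite order enters), choose $\epsilon<4^{-N}$, and take $R_0$ so large that $4\delta(R)<\epsilon$ for $R\ge R_0$. Iterating $n$ times from any $R\ge R_0$ gives
\[
\phi(R)\le \frac{C}{1-\epsilon}+(4^N\epsilon)^n R^N,
\]
and letting $n\to\infty$ yields $\phi(R)\le C/(1-\epsilon)$. Hence $\phi$, $\tau$, and $\log M_f$ are bounded, and Liouville concludes. So the iteration converges in one clean pass; no delicate exponent-reduction scheme or harmonic-measure argument is needed.

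One caution: the alternative you propose---\emph{selecting} radii $R_k$ along which $|E_{R_k}|$ is small enough to kill $\log M_f(R_k)$ in a single step---does \emph{not} follow from zero area density and is not equivalent to the iteration. The hypothesis only gives $\int_0^R|E_r|\,r\,dr=o(R^2)$, which is compatible with, say, $|E_r|\asymp(\log r)^{-1}$ for every $r$; then $|E_r|\cdot\log M_f(r)$ may still diverge for all $r$. The iteration is genuinely what makes the argument work.
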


Next we discuss growth properties of functions in the spaces 
$\mathcal{H}(T,A,\mu)$. 

\begin{lemma}
\label{gr1}
Let $A$ be an entire function with the zero set $T$ 
and let $A$ be of order $\rho$. Then for any $\vep>0$, there exists
a set $E\subset (0,\infty)$ of zero linear density \textup(i.e., 
$|E \cap (0, R)| = o(R)$, $R\to\infty$\textup) such that
for any entire function $f$ of the form $f = A \mathcal{C}_{a, k}$,
\begin{equation}
\label{cart}
|f(z)| \lesssim |z|^{\rho+k+1+\vep} |A(z)|, \qquad |z|\notin E.
\end{equation}

In particular, if $A$ is of order $\rho$ and type $\sigma$, then
any element of $\mathcal{H}(T,A,\mu)$  is of order at most $\rho$ 
and of type at most $\sigma$ with respect to this order.
\end{lemma}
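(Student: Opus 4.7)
The plan is to estimate $|\mathcal{C}_{a,k}(z)|$ pointwise outside a thin set of radii and then multiply by $|A(z)|$ to obtain \eqref{cart}. First I fix the exceptional set: since $A$ has order $\rho$, Jensen's formula gives the counting bound $n(r):=\#\{n:|t_n|\le r\}\lesssim r^{\rho+\delta}$ for every $\delta>0$. Set $\alpha=\max(0,\rho-1)+\varepsilon/3$ and let
\[
E=\bigcup_n\bigl(|t_n|-|t_n|^{-\alpha},\,|t_n|+|t_n|^{-\alpha}\bigr)\cap(0,\infty).
\]
Partial summation using the bound on $n(r)$ yields $|E\cap(0,R)|\le 2\sum_{|t_n|\le R+1}|t_n|^{-\alpha}\lesssim R^{\max(0,\rho-\alpha+\delta)}$; choosing $\delta$ small compared with $\varepsilon$ the exponent is strictly less than $1$, hence $E$ has zero linear density.

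Next, for $|z|=r\notin E$ and $r$ large, the reverse triangle inequality gives $|z-t_n|\ge\bigl||z|-|t_n|\bigr|\ge |t_n|^{-\alpha}$ for every $n$. I split
\[
\mathcal{C}_{a,k}(z)=z^k\sum_n\frac{a_n}{t_n^k(z-t_n)}
\]
into three regimes $|t_n|>2r$, $|t_n|<r/2$, and $r/2\le|t_n|\le 2r$, using $|z-t_n|\ge|t_n|/2$, $|z-t_n|\ge r/2$, and $|z-t_n|\ge(2r)^{-\alpha}$ respectively. Together with the pointwise inequality $|a_n|/|t_n|^k\le 2r\cdot|a_n|/|t_n|^{k+1}$ on $|t_n|\le 2r$, each partial sum is controlled by $C:=\sum_n|a_n|/|t_n|^{k+1}<\infty$. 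A short calculation gives the two outer contributions $\lesssim r^k$ and the middle contribution $\lesssim r^{k+1+\alpha}$; hence $|\mathcal{C}_{a,k}(z)|\lesssim r^{k+1+\alpha}\le r^{\rho+k+1+\varepsilon}$, and multiplication by $|A(z)|$ completes \eqref{cart}.

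For the second assertion, any $f\in\mathcal{H}(T,A,\mu)$ has the form $f=A\,\mathcal{C}_{c,0}$ with $c_n=a_n\mu_n^{1/2}$ and $\{a_n\}\in\ell^2$. Cauchy--Schwarz combined with the standing hypothesis $\sum_n\mu_n/(|t_n|^2+1)<\infty$ and $|t_n|\to\infty$ yields $\sum_n|c_n|/|t_n|<\infty$, so the first part applies with $k=0$. If $A$ has type $\sigma$ at order $\rho$, then $|A(z)|\le\exp\bigl((\sigma+\varepsilon')|z|^\rho\bigr)$ for $|z|$ large and any $\varepsilon'>0$, so absorbing the polynomial factor $|z|^{\rho+1+\varepsilon}$ gives $|f(z)|\le\exp\bigl((\sigma+2\varepsilon')|z|^\rho\bigr)$ for $r=|z|\notin E$ large. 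Since $E$ has zero linear density, for every large $r$ I can pick $r'\in[r,(1+\delta)r]\setminus E$; the maximum modulus principle then gives $M(r,f)\le M(r',f)\le\exp\bigl((\sigma+2\varepsilon')(1+\delta)^\rho r^\rho\bigr)$, and sending $\delta,\varepsilon'\to 0$ concludes that $f$ has order at most $\rho$ and type at most $\sigma$.

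The main technical point is the calibration of $\alpha$: too small, and $E$ fails to have zero linear density; too large, and the middle-region factor $r^\alpha$ overshoots the target exponent. The choice $\alpha=\max(0,\rho-1)+\varepsilon/3$ matches both constraints, with the slack $\varepsilon/3$ providing the decay needed in the Abel-summation step.
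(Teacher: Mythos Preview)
Your argument is correct and takes a genuinely different route from the paper's. The paper reduces to the unregularized case $\mathcal{C}_a$ with $a\in\ell^1$, works on dyadic annuli $\{2^n\le|z|\le 2^{n+1}\}$, and invokes Cartan's lemma to produce small exceptional discs outside of which $\dist(z,T)\ge 1/p$ with $p\lesssim |z|^{\rho+\varepsilon}$; the set $E$ is then the union of radii meeting these discs. You instead fix $E$ directly as a union of radial intervals of length $2|t_n|^{-\alpha}$ and use only the crude lower bound $|z-t_n|\ge\bigl||z|-|t_n|\bigr|$. Your approach is more elementary (no Cartan's lemma, no dyadic decomposition) and in fact yields the slightly sharper exponent $k+1+\alpha$ with $\alpha=\max(0,\rho-1)+\varepsilon/3$, which is at most $\rho+k+\varepsilon$ when $\rho\ge 1$, improving on the stated $\rho+k+1+\varepsilon$. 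The trade-off is that the paper's method separates more cleanly the roles of the zero-counting function and the coefficient sequence, and its exceptional set is constructed locally (annulus by annulus), which can be convenient in other contexts; your calibration of $\alpha$ is a global choice that must simultaneously satisfy the density bound and the pointwise estimate, as you correctly note.
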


\begin{proof}
Let $a=(a_n)$ be such that $\sum_n |t_n|^{-k-1} |a_n|<\infty$.
In view of the representation 
$$
A(z)\mathcal{C}_{a, k} (z)  = A(z)P(z) + A(z) z^{k+1} 
\sum_n \frac{a_n}{t_n^{k+1}(z-t_n)},
$$
where $P$ is a polynomial of degree at most $k$, it suffices to prove 
the statement for $\mathcal{C}_{a}$ with $a\in \ell^1$.

For a fixed sufficiently large $n\in \mathbb{N}$ let $\mathcal{R} = \{z: 2^{n}\le |z| \le 2^{n+1}\}$
and $\mathcal{R}' = \{z: 2^{n-1}\le |z| \le 2^{n+2}\}$. 
Let $\{t_{n_1}, t_{n_2}, \dots, t_{n_p} \} = T\cap \mathcal{R}'$. 
Since $A$ is of order 
$\rho$, we have 
$p\lesssim 2^{(\rho+\vep)n}$.

Let $f= A\sum_{n}\frac{a_n}{z-t_n}$. Then,
for $z\in \mathcal{R}$,
$$
\bigg|\sum_{t_n \notin \mathcal{R}'} \frac{a_n}{z-t_n}\bigg|
\lesssim  \sum_{t_n \notin \mathcal{R}'} \frac{|a_n|}{|t_n|}
\lesssim 1.
$$
By the classical Cartan's lemma \cite[Chapter 1, \S 7]{lev1},
there exist discs $D_j$, $j=1, \dots p$, of radii $r_j$ such that $\sum_{j=1}^p r_j <2$
and 
$$
\min_{z\in \mathcal{R} \setminus \cup_j D_j} {\rm dist}\, 
(z, T\cap\mathcal{R}') \ge \frac{1}{p}.
$$
Hence, for $z\in \mathcal{R} \setminus \cup_j D_j$,
we have 
$$
\bigg|\sum_{t_n \in \mathcal{R}'} \frac{a_n}{z-t_n}\bigg| \lesssim
p  \sum_{t_n \in \mathcal{R}'}  
|a_n| \lesssim p
\lesssim |z|^{\rho+\vep}. 
$$
Now we repeat this procedure for any $n\in \mathbb{N}$ and define $E$ as the set of $r$ 
such that $\{|z|=r\} \cap (\cup D_j) \ne \emptyset$. Then \eqref{cart} holds for any
$z$ with $|z|\notin E$.
\end{proof}

Note that we have the following criterion for the inclusion 
of $f$ into $\mathcal{H}(T,A,\mu)$.

\begin{theorem}
\label{inc}
Let $\mathcal{H}(T,A,\mu)$ be a Cauchy--de Branges space
and let $A$ be of finite order. Then an entire function $f$ 
is in $\mathcal{H}(T,A,\mu)$ if and only if the following three conditions hold:
\begin{enumerate} 
\item [(i)] 
$\sum_n\dfrac{|f(t_n)|^2}{|A'(t_n)|^2 \mu_n} <\infty$\textup;      
\smallskip
\item [(ii)]
there exists a set $E\subset (0,\infty)$ 
of zero linear density and $N>0$ such that $|f(z)| \le |z|^N |A(z)|$,
$|z| \notin E$\textup;
\smallskip
\item [(iii)] there exists a set $\Omega$ of positive area density such that 
$|f(z)| = o(|A(z)|)$, $|z|\to \infty$, $z\in \Omega$.
\end{enumerate}
\end{theorem}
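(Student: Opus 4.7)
The strategy is to compare $f$ with the canonical candidate $\tilde f\in \mathcal{H}(T,A,\mu)$ that has the same values as $f$ on $T$, and show that their difference divided by $A$ is forced to be zero.

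For the necessity direction, let $f(z)=A(z)\sum_n\frac{a_n\mu_n^{1/2}}{z-t_n}$ with $\{a_n\}\in\ell^2$. Computing $f(t_k)$ by L'Hôpital gives $f(t_k)=A'(t_k)\mu_k^{1/2}a_k$, so $\sum_n|f(t_n)|^2/(|A'(t_n)|^2\mu_n)=\|a\|_{\ell^2}^2<\infty$, proving (i). The Cauchy--Schwarz inequality combined with $\sum_n\mu_n/(|t_n|^2+1)<\infty$ yields $\sum_n|a_n|\mu_n^{1/2}/|t_n|^{k+1}<\infty$ for suitable $k$ (e.g.\ $k=1$), so Lemma \ref{gr1} supplies the polynomial-in-$|z|$ bound on $|f|/|A|$ off a zero linear-density set, giving (ii). For (iii) I would split $\mathcal{C}(z):=\sum_n a_n\mu_n^{1/2}/(z-t_n)$ into the three ranges $|t_n|\le|z|/2$, $|z|/2<|t_n|\le 2|z|$, $|t_n|>2|z|$, estimating the outer two by Cauchy--Schwarz (using that $\sum_{|t_n|\le R}\mu_n=o(R^2)$ and tails of $\sum\mu_n/|t_n|^2$ go to zero), and dealing with the middle annulus through Cartan's lemma exactly as in the proof of Lemma \ref{gr1}. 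This shows $\mathcal{C}(z)\to 0$ outside a set of zero area density, which is exactly (iii) since $|f|/|A|=|\mathcal{C}|$ for large $|z|$.

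For the sufficiency direction, assume (i)--(iii). Set $a_n:=f(t_n)/(A'(t_n)\mu_n^{1/2})$, so that $\{a_n\}\in\ell^2$ by (i), and define
$$
\tilde f(z):=A(z)\sum_n\frac{a_n\mu_n^{1/2}}{z-t_n}\in\mathcal{H}(T,A,\mu).
$$
By construction $\tilde f(t_n)=f(t_n)$ for every $n$, hence $h:=f-\tilde f$ vanishes on $T=\mathcal{Z}_A$ (simple zeros of $A$), and $g:=h/A$ is entire. The plan is to prove that $g\equiv 0$.

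The first step is to show $g$ is a polynomial. By (ii) we have $|f(z)|\lesssim|z|^N|A(z)|$ off a zero linear-density set $E_1$, and by Lemma \ref{gr1} applied to $\tilde f$ we get $|\tilde f(z)|\lesssim|z|^{M}|A(z)|$ off a zero linear-density set $E_2$. Consequently $|g(z)|\lesssim|z|^{\max(N,M)}$ for $|z|\notin E_1\cup E_2$, and by the maximum principle on discs of radius $r$ with $r\notin E_1\cup E_2$ (which exist arbitrarily large since $E_1\cup E_2$ still has zero linear density) this bound propagates to all of $\mathbb C$, so $g$ is a polynomial. The second step is to apply (iii) together with the zero-area-density statement established in the necessity direction (applied to $\tilde f$) to get $|g(z)|=o(1)$ on a set of positive area density. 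Any nonconstant polynomial blows up at infinity, and a nonzero constant cannot satisfy $g(z)\to 0$ on an unbounded set either, so $g\equiv 0$ and $f=\tilde f\in\mathcal{H}(T,A,\mu)$.

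The main obstacle is the asymptotic analysis of $\mathcal{C}(z)$ in the necessity part: the ``measure'' $\sum_n a_n\mu_n^{1/2}\delta_{t_n}$ is in general not finite, so one cannot cite Lemma \ref{verd} as a black box and must instead argue directly via the annular decomposition and Cartan-style exceptional discs. Once this ``area-density'' decay is in hand, the rest of the sufficiency argument (polynomial growth plus decay forces the entire quotient to vanish) is routine.
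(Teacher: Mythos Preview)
Your overall architecture matches the paper's exactly: for necessity you read off (i) from the coefficients and invoke Lemma~\ref{gr1} for (ii); for sufficiency you form the candidate $\tilde f$, show that $g=(f-\tilde f)/A$ is entire, use (ii) together with Lemma~\ref{gr1} and the maximum principle on circles $|z|=r\notin E_1\cup E_2$ to force $g$ to be a polynomial, and then kill the polynomial using (iii) and the decay of $\tilde f/A$.

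The one substantive divergence is in the necessity of (iii). You correctly note that the ``measure'' $\sum_n a_n\mu_n^{1/2}\,\delta_{t_n}$ need not be finite and conclude that Lemma~\ref{verd} is unavailable, proposing instead a direct annular decomposition with Cartan-type exceptional discs. The paper avoids this entirely by a one-line regularisation: writing
\[
\frac{f(z)}{A(z)}
=\sum_n\frac{a_n\mu_n^{1/2}}{z-t_n}
= z\sum_n\frac{a_n\mu_n^{1/2}}{t_n(z-t_n)}
-\sum_n\frac{a_n\mu_n^{1/2}}{t_n},
\]
one has $\sum_n|a_n|\mu_n^{1/2}/|t_n|<\infty$ by Cauchy--Schwarz and the standing hypothesis $\sum_n\mu_n/(|t_n|^2+1)<\infty$. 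Thus the first sum on the right is the Cauchy transform of a \emph{finite} measure with total mass exactly equal to the subtracted constant; Lemma~\ref{verd} applies verbatim and gives $f/A=o(1)$ off a set of zero area density. This same trick is what the paper means by ``the same argument as above'' when it shows $\sum_n f(t_n)/(A'(t_n)(z-t_n))\to 0$ in the sufficiency step.

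Your annular route is plausible but the phrase ``exactly as in the proof of Lemma~\ref{gr1}'' oversells it: that lemma only produces a polynomial bound on $|\mathcal C(z)|$, whereas here you need $o(1)$. Getting the middle-annulus contribution down to $o(1)$ off a zero-\emph{area}-density set requires a more careful choice of exceptional discs than the fixed-total-radius Cartan discs used there, so some additional work (or the regularisation above) is needed.
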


\begin{proof}
The necessity of (i) is obvious since for $f= A\sum_{n}\frac{c_n\mu_n^{1/2}}{z-t_n}
\in \mathcal{H}(T,A,\mu)$ we have $f(t_n) = A'(t_n)c_n \mu_n^{1/2}$ and 
$\{c_n\}\in \ell^2$.
The necessity of (ii) is proved in Lemma \ref{gr1}. Finally,  the representation 
$$
\frac{f(z)}{A(z)} = z \sum_{n}\frac{c_n\mu_n^{1/2}}{t_n(z-t_n)} - 
\sum_{n}\frac{c_n\mu_n^{1/2}}{t_n}
$$
and Lemma \ref{verd} imply  that  $f(z)/A(z) = o(1)$ as $|z|\to\infty$
outside a set of zero density. 

To prove the sufficiency consider the function 
$$
H(z)  = \frac{f(z)}{A(z)} - \sum_{n} \frac{f(t_n)}{A'(t_n)(z-t_n)}
$$
which is well defined by (i) and entire. Condition (ii) and Lemma \ref{gr1}
imply2 that $H$ 
is a polynomial. Finally, note that, by the
same argument as above, $\sum_{n} \frac{f(t_n)}{A'(t_n)(z-t_n)}$ tends to zero 
as $|z|\to\infty$ on some 
set $\Omega_1$ whose complement has zero density. Hence, by (iii), 
$|H(z)|\to 0$, $|z|\to\infty$, $z\in \Omega\cap\Omega_1$. Since the set 
$\Omega\cap\Omega_1$ is obviously unbounded, we conclude that $H\equiv 0$. 
Thus, $f$ has the required representation with $c_n = f(t_n)/(A'(t_n)\mu_n^{1/2})$.
\end{proof}

In many cases one can relax the conditions (ii)--(iii) and require the estimates
on a smaller set.

Note that, for $f\in \mathcal{H}(T,A,\mu)$, 
$$
\|f\|^2_{\mathcal{H}(T,A,\mu)} = \|\{c_n\}\|_{\ell^2}^2 = 
\sum_n \frac{|f(t_n)|^2}{|A'(t_n)|^2 \mu_n}.
$$ 
Thus, the space $\mathcal{H}(T,A,\mu)$
is isometrically embedded into the space $L^2(\nu)$, where 
\begin{equation}
\label{nu}
\nu = \sum_n |A'(t_n)|^{-2} \mu_n^{-1} \delta_{t_n}.
\end{equation}
\bigskip


\section{Proof of Theorem \ref{main1}}
\label{proofth1}

In this section we prove Theorem \ref{main1}. In what follows we assume that 
$F = \mathcal{C}_{a, k}/\mathcal{C}_{b, m}$ and $T$ satisfies one of the conditions
$\mathbf{Z}$, $\mathbf{\Pi}$, or $\mathbf{A_\gamma}$.
\medskip

In the {\bf Case} $\mathbf{Z}$ the result follows directly from Lemma~\ref{gr1}.
Let $A$ be a function of zero exponential type with zero set $T$. 
Then $H  = A \mathcal{C}_{a, k}$ 
is an entire function and by Lemma~\ref{gr1} we have 
$|H(z)|\lesssim |z|^N |A(z)|$ for $|z|$ outside some small exceptional set. 
We conclude that $H$ is of zero exponential type. Hence, if 
$F = \mathcal{C}_{a, k}/\mathcal{C}_{b, m}$, then we can write $F=H_1/H_2$
for two functions $H_1, H_2$ of minimal type. By the standard estimates of 
the minimum of modulus for entire functions \cite[Chapter 1, \S 8]{lev}, $F$ 
is of minimal type.  
\medskip

{\bf Case $\mathbf{\Pi}$.} Without loss of generality, let $T \subset 
\{-h \le \ima z\le h\}$. Then, by Lemma \ref{boun}, $F$ is of bounded type in
the half-planes $\CC^+ +i h$ and $\CC^- -ih$. 

Since $T$ has a finite convergence exponent, there exists an entire function $A$ 
of finite order such that $\mathcal{Z}_A = T$. Then, as above, we can write
$F=H_1/H_2$ where $H_1  = A \mathcal{C}_{a, k}$, $H_2  = A \mathcal{C}_{b, m}$.
By Lemma \ref{gr1}, $\rho(H_j)\le \rho(A)$ whence $\rho(F) \le \rho(A)$.  
Choose $\vep\in (0,1)$ such that 
$\rho(F) < \pi/(2\vep)$. By Lemma \ref{boun0}, there exists $R>0$
such that 
$$
\log|F(z)| \lesssim |z|, \qquad \arg z\in [\vep, \pi-\vep] \cup 
[\pi+\vep, 2\pi -\vep], \ |z|>R.
$$
Since $\rho(F) < \pi/(2\vep)$, we can apply the standard 
Phragm\'en--Lindel\"of  principle  to the angles 
$-\vep <\arg z <\vep$ and $\pi-\vep < \arg z<  \pi+\vep$
to conclude that $F$ is of exponential type.
Since $F$ is of bounded type in $\CC^+ ih$, we have
$\log|F(t+ih)| \in L^1\big(\frac{dt}{t^2+1}\big)$. 
Therefore $F(z+ih)$ is of Cartwright class
and, finally, $F$ is of Cartwright class. 
\medskip

{\bf Case $\mathbf{A_\gamma}$.} Here we follow essentially the method of de Branges 
\cite[Theorem 11]{br}. Put $A(\gamma_1, \gamma_2) = \{z: \gamma_1 < \arg z 
<\gamma_2\}$. Choose $\gamma' \in (\gamma, 1 )$ such that $\rho(A) \le 1/\gamma'$.
Without loss of generality we can assume that 
$T \subset A(\delta, \pi\gamma +\delta)$ where $\delta$ is so small that 
  $\pi\gamma +\delta <\pi \gamma'$. 

By Lemma \ref{boun}, $F$ is of bounded type in the half-planes 
$\{-\pi +\delta <\arg z<\delta\}$
and $\{ \pi\gamma +\delta <\arg z<\pi\gamma +\delta +\pi\}$. Then, by Lemma \ref{boun0}, 
we have
$$
\log|F(z)| \lesssim |z|, \qquad \pi\gamma' \le 
\arg z \le 2\pi +\delta/2.
$$
It remains to estimate $|F|$ in the angle $A(\delta/2, \pi\gamma')$.

By Lemma \ref{boun}, $F$ is of bounded type in $\mathbb{C}^-$. Then $\log|F| 
\in L^1\big(\frac{dt}{t^2+1}\big)$. Let $G$ be an outer function in $\CC^+$
such that $|G| = |F|^{-1}$ on $\RR$. Since
$F$ is of bounded type in the half-plane $\{\pi\gamma+\delta <\arg z < 
\pi\gamma+\delta +\pi\}$, we have 
$$
\limsup_{r\to\infty} \frac{\log|F(r e^{i\gamma'})|}{r}  <\infty.
$$
Choose sufficiently large $h>0$ so that $\widetilde{F} = FGe^{ihz}$ is bounded on
the ray $\{\arg z = \gamma'\}$. Also, $\widetilde{F}$ is bounded on $\RR$. 
Let us show that $\widetilde{F}$ is bounded in $A(0, \pi\gamma')$. By Lemma \ref{gr1}, 
$\rho(F) \le \rho(A)$. Choose $\vep>0$ such that $\rho(F) +\vep < \frac{1}{\gamma'}$. 
Then we have
$$
\log|\widetilde{F}(z)| \lesssim |z| + |z|^{\rho(F) +\vep}  +\log|G(z)|, \qquad z\in
A(0, \pi\gamma'), \ \ |z|\ge 1.
$$               

Consider the function $F_1(z) = \widetilde{F} (z^{\gamma'})$. Then $F_1$ is analytic in 
$\CC^+$, continuous up to $\RR$,  and bounded on $\RR$. Using the estimate \eqref{out}
we get
$$
\log|F_1(z)| \lesssim r^{\gamma'} + r^{\gamma'(\rho(F) +\vep)}  
+ \frac{r^{\gamma'}}{\sin \gamma'\theta}, \qquad z=re^{i\theta} \in \CC^+, \ \ r\ge 1.
$$           
We conclude that 
$$
\lim_{r\to\infty} \frac{1}{r} \int_0^\pi \log^+|F_1(r^{i\theta})| \sin \theta\, 
d\theta =0.
$$
By the de Branges version of the Phragm\'en--Lindel\"of principle 
\cite[Theorem 1]{br}, $F_1$ is bounded 
in $\CC^+$. Thus, $\widetilde{F}$ is bounded in $A(0, \pi\gamma')$. 
Using the fact that $|\log|G(z)|| \lesssim |z|$, $z\in A(\delta/2, \pi-\delta/2)$,
we conclude that $\log|F(z)| \lesssim |z|$ for $z\in A(\delta/2, \pi\gamma')$,  
$|z|\ge 1$. Thus, $F$ is of finite exponential type. 

It remains to show the $F$ is of zero type.
Since $\log|F| \in L^1\big(\frac{dt}{t^2+1}\big)$, $F$ is of Cartwright class
and so  we have $\liminf_{|x| \to \infty} 
\frac{\log|F(x)|}{|x|} \le 0$. Hence, by Lemma \ref{boun0}, 
$F$ is of non-positive mean type in the half-planes 
$\{\pi\gamma+\delta < \arg z < \pi\gamma +\delta + \pi\}$ and 
$\{\pi+\delta < \arg z< 2\pi +\delta\}$. Therefore, for any $\vep>0$,
$\log|F(z)| \le \vep |z|$ when $\pi\gamma' < \arg z < 2\pi +\delta/2$
and $|z|$ is sufficiently large. The standard Phragm\'en--Lindel\"of  
principle now implies that $F$ is of zero exponential type.
\qed
\medskip

Given $T$, denote by $\mathcal{C}$ the class of all regularized Cauchy transforms
$\mathcal{C}_{a, k}$ with poles on $T$, by  $\mathcal{C}/\mathcal{C}$ the
class of functions of the form $\mathcal{C}_{a, k}/\mathcal{C}_{b, m}$, etc. Then,
by the same arguments as above one easily obtains the following result that we
will use in what follows: 

\begin{corollary}
\label{wer}
Let $T$ satisfy one of the conditions $\mathbf{Z}$, $\mathbf{\Pi}$ 
or $\mathbf{A_\gamma}$. 
If  $F\in \mathcal{C} + \mathcal{C} \cdot \mathcal{C}/\mathcal{C}$, then 
the conclusions of Theorem \ref{main1} hold.
\end{corollary}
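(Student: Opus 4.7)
The plan is to reduce Corollary \ref{wer} to Theorem \ref{main1} by clearing denominators and observing that each analytic property used in the proof of Theorem \ref{main1} is stable under sums and products. If $F = \mathcal{C}_1 + \mathcal{C}_2 \mathcal{C}_3 / \mathcal{C}_4$ with each $\mathcal{C}_j$ a regularized Cauchy transform with poles on $T$, I rewrite
\[
F = \frac{\mathcal{C}_1 \mathcal{C}_4 + \mathcal{C}_2 \mathcal{C}_3}{\mathcal{C}_4},
\]
so the task is to show that the numerator $N := \mathcal{C}_1 \mathcal{C}_4 + \mathcal{C}_2 \mathcal{C}_3$ and the denominator $D := \mathcal{C}_4$ satisfy the same qualitative estimates that drove the proof of Theorem \ref{main1}.

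In case $\mathbf{Z}$, let $A$ be an entire function of zero exponential type vanishing exactly on $T$. By Lemma \ref{gr1} each $A \mathcal{C}_j$ is entire of zero exponential type, so $A^2 N$ is entire of zero exponential type (the class is closed under sums and products), and $A D = A \mathcal{C}_4$ is entire of zero exponential type. Writing $F = (A^2 N)/(A \cdot A\mathcal{C}_4)$ presents $F$ as a ratio of two entire functions of minimal type, and the minimum-modulus argument from the proof of the $\mathbf{Z}$-case in Theorem \ref{main1} (based on \cite[Chapter 1, \S 8]{lev}) delivers that $F$ is of minimal type.

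In cases $\mathbf{\Pi}$ and $\mathbf{A_\gamma}$, Lemma \ref{boun} supplies two complementary half-planes in which each $\mathcal{C}_j$ lies in the Smirnov class, and bounded type is preserved under sums and products in the same half-plane. Hence $N$ and $D$ are of bounded type in those half-planes, and so is $F = N/D$. Clearing the poles by multiplying $N$ and $D$ by a suitable power of a canonical product $A$ with $\mathcal{Z}_A = T$ expresses them as entire functions of finite order at most $2\rho(A)$; since $F$ is assumed entire, the identity (entire of finite order) $\cdot$ (entire of finite order) $=$ (entire of finite order) forces $\rho(F) < \infty$. From here, the Phragmén--Lindelöf arguments in the proof of Theorem \ref{main1} --- the standard version applied to two thin angles in case $\mathbf{\Pi}$, and the de Branges version together with the outer auxiliary function $G$ in case $\mathbf{A_\gamma}$ --- apply to $F$ verbatim, yielding finite exponential type, Cartwright class with respect to the appropriate line in cases $\mathbf{\Pi}$ and $\mathbf{A_\gamma}$, and zero exponential type in case $\mathbf{A_\gamma}$.

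The main obstacle I anticipate is bookkeeping the exponents: taking products of two Cauchy transforms doubles the polynomial loss in Lemma \ref{gr1}, so one has to verify that the choice of $\varepsilon$ satisfying $\rho(F) < \pi/(2\varepsilon)$ in case $\mathbf{\Pi}$, and the choice of $\gamma' \in (\gamma, 1)$ with $\rho(F) \le 1/\gamma'$ in case $\mathbf{A_\gamma}$, can still be made with these enlarged order bounds. However, the Phragmén--Lindelöf step uses only $\rho(F)$, not the orders of $N$ and $D$ separately, and $\rho(F)$ is controlled purely in terms of $\rho(A)$ by the argument above, so the adjustment is essentially automatic and the rest of the proof transcribes line by line from Section \ref{proofth1}.
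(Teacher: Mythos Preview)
Your approach is correct and is essentially what the paper intends by ``the same arguments as above'' (the paper gives no separate proof of the corollary). One correction worth noting: the order of a product satisfies $\rho(fg)\le \max(\rho(f),\rho(g))$, so $A^2 N$ and $A^2 D$ have order at most $\rho(A)$, not $2\rho(A)$; this is actually needed in case $\mathbf{A_\gamma}$, where the Phragm\'en--Lindel\"of step requires $\rho(F)<1/\gamma'$ for some $\gamma'\in(\gamma,1)$ and the hypothesis only provides $\rho(A)<1/\gamma$ --- a bound of $2\rho(A)$ would genuinely fail when $\rho(A)\ge 1/(2\gamma)$. With the correct bound $\rho(F)\le\rho(A)$ (via the minimum modulus estimate, exactly as in the paper's proof of Theorem~\ref{main1}), your reduction goes through verbatim.
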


\begin{remark}
{\rm We do not know whether in the case $\mathbf{\Pi}$ the condition that
$T$ has finite convergence exponent can be omitted.   }
\end{remark}
\bigskip


\section{Counterexamples to Krein-type theorem}
\label{count0}

In this section we prove Theorem \ref{count_a}. However, we start with a simpler example
for the special case of the half-plane. Namely, we show that 
there exists a function $F$ with zeros in the lower half-plane which admits 
the representation \eqref{krein} and $F$ is of order 1, but of 
{\it maximal} (i.e., infinite) type. This shows the sharpness of our results
in case $\mathbf{A_\gamma}$ in the limit case $\gamma = 1$. This example will play
an important role in the construction in Theorem \ref{main4}.

\begin{example} 
\label{ex1}
There exists an entire function $F$ of order 1 
with simple zeros $t_n$ in $\CC^-$ such that 
$\sum_n |F'(t_n)|^{-1} <\infty$ and
\begin{equation}
\label{kre}
\frac{1}{F(z)} = \sum_n \frac{1}{F'(t_n)(z-t_n)},
\end{equation}
but $F$ is of maximal type with respect to order 1.

{\rm Let $n_k$ be an increasing sequence such that $n_{k+1}-n_k \ge 1$. Put
$$
G(z) = \prod_{k=1}^\infty \bigg(1-\frac{e^{2\pi iz}}{e^{2\pi n_k}}\bigg).
$$
Then $G$ is an entire function with zeros $z_{m,k} = m-in_k$, $m\in\mathbb{Z}$, $k\in
\mathbb{N}$. By simple estimates of lacunary canonical products, for any $N>0$
there exists $C>0$ such that
$$
\prod_{k=1}^\infty \bigg| 1 -\frac{w}{e^{2\pi n_k}}\bigg| \ge C|w|^N
{\rm dist}\, (w, \{e^{2\pi n_k}\}).
$$
Hence, $|G(z)| \asymp 1$, $\ima z\ge 0$, 
$|G(z)| \gtrsim {\rm dist}\,(z, \{ m-in_k \}_{m,k})$ and so
$|G'(m-in_k)|\gtrsim 1$. Put $F=PG$ where $P$ is a polynomial of degree at least 3
whose zeros are not in the set $\{m-in_k\}$. 
Then $\sum_{t_n \in \mathcal{Z}_F} |F'(t_n)|^{-1} <\infty$
since $\sum_{m, k} |m-in_k|^{-3} <\infty$. Let us show that 
the entire function
$$
H(z) = \frac{1}{F(z)} - \sum_n \frac{1}{F'(t_n)(z-t_n)}
$$ 
is identically zero. From the estimates on $G$ it follows that $|H(z)|\lesssim 1$ 
when ${\rm dist}\, (z,\{t_n\}) \ge 1/2$, and $H(iy)\to 0$,
$y\to\infty$. Hence, $H\equiv 0$.   

If $n_k = k$, the function $F$ is of order 2. However, taking $n_k$ to be sufficiently 
sparse (e.g., $n_k = 2^k$) we obtain an example of a function of order 1 and maximal 
type which has expansion \eqref{kre}.}
\end{example}
\medskip

Now we pass to the proof of Theorem \ref{count_a}. In what follows let 
$D = A(0, \pi\gamma)$ be the angle of size 
$\pi\gamma$ and $\Gamma = \partial D$ be its boundary 
(oriented from $e^{i\pi\gamma} \infty$ to $+\infty$).  

\begin{lemma}
\label{bhh}
Let $g$ be a function analytic in a slightly larger angle $A(-\vep, \pi\gamma +\vep)$
for some $\vep>0$ and assume that, for some $C>0$,
we have
\begin{equation}
\label{kre1}
|g(z)| + |g'(z)| \le \frac{C}{1+|z|^4},
\qquad {\rm dist}\, (z, \Gamma) \le (|z|+1)^{-1}.
\end{equation}
Then for the Cauchy integral of $g$ over $\Gamma$ we have
$$
\int_\Gamma \frac{g(w)}{z-w}dw  = \frac{1}{z}
\int_\Gamma g(w) dw +o\bigg(\frac{1}{z}\bigg), \qquad |z|\to\infty.
$$
\end{lemma}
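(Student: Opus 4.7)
The plan is to use the elementary identity
$$
\frac{1}{z-w} \;=\; \frac{1}{z} + \frac{w}{z(z-w)},
$$
which gives
$$
\int_\Gamma \frac{g(w)}{z-w}\,dw - \frac{1}{z}\int_\Gamma g(w)\,dw \;=\; \frac{1}{z}\int_\Gamma \frac{wg(w)}{z-w}\,dw.
$$
Restricting the tube hypothesis to $\Gamma$ itself yields $|g(w)|\lesssim (1+|w|)^{-4}$ on $\Gamma$, so both $\int_\Gamma g\,dw$ and $\int_\Gamma wg\,dw$ converge absolutely. Hence the lemma reduces to showing
$$
I(z) := \int_\Gamma \frac{wg(w)}{z-w}\,dw \;=\; o(1),\qquad |z|\to\infty.
$$

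To estimate $I(z)$, I would split $\Gamma$ into three pieces according to $|w|$ relative to $|z|$:
$$
\Gamma_1 = \Gamma\cap\{|w|\le |z|/2\},\quad \Gamma_2 = \Gamma\cap\{|z|/2 < |w|\le 2|z|\},\quad \Gamma_3 = \Gamma\cap\{|w|>2|z|\}.
$$
On $\Gamma_1$ one has $|z-w|\ge |z|/2$, so the integrability of $wg$ yields $\int_{\Gamma_1} = O(1/|z|)$. On $\Gamma_3$ one has $|z-w|\ge |w|/2$ and $|g(w)|\lesssim |w|^{-4}$, giving $\int_{\Gamma_3} = O(1/|z|^3)$. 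Both bounds hold uniformly in the position of $z$ and use only the decay of $g$ on $\Gamma$.

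The delicate case is $\Gamma_2$, since $z$ may lie arbitrarily close to $\Gamma$. Here I would use the analyticity of $g$ in the enlarged angle $A(-\varepsilon,\pi\gamma+\varepsilon)$ to deform $\Gamma_2$, by Cauchy's theorem, to a contour $\Gamma_2'$ with the same endpoints, lying at distance $\sim (1+|z|)^{-1}$ from $\Gamma$ on the side of $\Gamma$ opposite to $z$. Then $\Gamma_2'$ stays inside the tube $\{w:\mathrm{dist}(w,\Gamma)\le (1+|w|)^{-1}\}$, so the hypothesis gives $|g(w)|\lesssim |z|^{-4}$ on $\Gamma_2'$; moreover, by the choice of side, $\mathrm{dist}(z,\Gamma_2')\gtrsim (1+|z|)^{-1}$, while $\mathrm{length}(\Gamma_2')\lesssim |z|$. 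Combining these,
$$
\Bigl|\int_{\Gamma_2'}\frac{wg(w)}{z-w}\,dw\Bigr| \;\lesssim\; |z|\cdot \frac{|z|\cdot |z|^{-4}}{(1+|z|)^{-1}} \;=\; O(1/|z|).
$$
Putting the three estimates together, $I(z) = O(1/|z|) = o(1)$, which is exactly what is needed.

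The main obstacle is the construction of the deformed contour $\Gamma_2'$: it must (i) lie inside the thin tube of radius $(1+|w|)^{-1}$ where the decay estimate for $g$ is guaranteed, (ii) not enclose $z$, so that Cauchy's theorem applies, and (iii) maintain distance $\gtrsim (1+|z|)^{-1}$ from $z$. This requires a small case analysis based on which side of each of the two rays of $\Gamma$ contains $z$, together with the bound on $g'$ in the hypothesis, which ensures via a Taylor expansion that the decay estimate for $g$ propagates from $\Gamma$ throughout the tube, and in particular along $\Gamma_2'$.
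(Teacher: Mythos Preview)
Your argument is correct, but it handles the singular part differently from the paper. Both proofs begin with the identity $\frac{1}{z-w}=\frac{1}{z}+\frac{w}{z(z-w)}$ and estimate the far pieces (small and large $|w|$) by straightforward size bounds. The difference is in the treatment of the dangerous region where $w$ may be close to $z$. The paper does \emph{not} deform the contour: it isolates the very short arc $\Gamma\cap\{|w-z|<(|z|+1)^{-1}\}$ and writes $g(w)=g(z)+(g(w)-g(z))$ there; the first piece contributes $\lesssim |g(z)|$ because $\int_{\text{arc}}\frac{dw}{w-z}$ is uniformly bounded on a straight segment, and the second piece is controlled via $|g(w)-g(z)|\le |w-z|\sup|g'|$, using the $g'$ hypothesis directly. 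The complementary region has $|z-w|\ge (|z|+1)^{-1}$ and is handled by decay alone. Your contour-deformation approach is a legitimate alternative and in fact requires only the $|g|$ bound in the tube, not the $|g'|$ bound --- so your closing remark about needing $g'$ to ``propagate the decay'' is superfluous: the hypothesis already gives $|g|\lesssim (1+|w|)^{-4}$ everywhere in the tube, and $\Gamma_2'$ lies in the tube by construction. The paper's route is slightly more elementary (no Cauchy theorem, no case analysis on the side of $z$), while yours makes the role of analyticity in the enlarged angle more visible and shows that the $g'$ assumption is, for this lemma, not essential.
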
                          

\begin{proof} 
We split the integral into three parts:
$$
\begin{aligned}
\int_\Gamma \frac{g(w)}{z-w}dw  & - \frac{1}{z} 
\int_\Gamma g(w) dw  \\
& 
= \int_{\{|w|<|z|/2\}} \frac{wg(w)}{z(z-w)}dw  + 
\int_{\{|w -z|<(|z|+1)^{-1}\}} \frac{wg(w)}{z(z-w)}dw \\
& + \int_{\{|w| \ge |z|/2,  |w-z|\ge (|z|+1)^{-1} \}} \frac{wg(w)}{z(z-w)}dw 
 = I_1 +I_2 +I_3.
\end{aligned}
$$
Clearly, $|I_1| \lesssim |z|^{-2}$, $|z|\ge 2$, and 
$$
|I_3| \lesssim   
\int_{ \{|w| \ge |z|/2\} } |wg(w)|\,|dw| \lesssim |z|^{-2}.
$$
Finally, to estimate the integral $I_2$ for $z$ close to $\Gamma$ note that
$$
\begin{aligned}
\bigg| \int_{\{|w -z|<(|z|+1)^{-1}\}} \frac{g(w)}{w-z} dw \bigg|
& \le
\bigg| \int_{\{|w -z|<(|z|+1)^{-1}\}} \frac{g(z)}{w-z} dw \bigg| 
\\
& +\bigg| \int_{\{|w -z|<(|z|+1)^{-1}\}} \frac{g(w)-g(z)}{w-z} dw \bigg| \\
& \lesssim
|g(z)| +\max_{|\zeta - z|\le (|z|+1)^{-1}}|g'(\zeta)| \lesssim |z|^{-4}.
\end{aligned}
$$
Combining these inequalities we obtain the estimate of the lemma.
\end{proof}

\begin{proof}[Proof of Theorem \ref{count_a}]
Let $D$ and $\Gamma$ be as above.
Let $f(z) = z^{-4}\sin^4 z$. Then $f$ is an entire function 
of finite exponential type. Put $g(z) = f(z^{1/\gamma})$, $z\in D$,  
and define the function $F$ for 
$z\in \mathbb{C} \setminus \overline{D}$ by the contour integral 
$$
F(z) = \frac{1}{2\pi i}\int_\Gamma \frac{g(w)}{z-w} dw, \qquad 
z\in \mathbb{C} \setminus \overline{D}
$$
(note that $|g(w)|\lesssim |w|^{-4/\gamma}$ and so there 
is no problem with convergence). 

Now we use a well-known trick to show that $F$ admits a continuation to an entire
function. Let $R>0$ and $\widetilde{\Gamma}$ be the contour 
$\{t e^{i\pi\gamma}: t\ge R\} \cup \{r e^{i\theta}: 0\le\theta\le \pi\gamma \}
\cup \{t\ge R\}$ 
(also oriented from $e^{i\pi\gamma} \infty$ to $+\infty$) 
and let $\widetilde{D}\subset D$ be the domain
such that $\partial \widetilde{D} = \widetilde{\Gamma}$.
Put
$$
\widetilde{F} (z) = \frac{1}{2\pi i}
\int_{\widetilde{\Gamma}} \frac{g(w)}{z-w} dw, \qquad 
z\in \mathbb{C} \setminus \overline{\widetilde{D}}.
$$
Then, for $z \in \mathbb{C} \setminus \overline{D}$,
$$
F(z) - \widetilde{F} (z) = \frac{1}{2\pi i}
\int_{\Gamma_0} \frac{g(w)}{z-w} dw = 0
$$
where $\Gamma_0$ is the counterclockwise oriented boundary of the sector
$S = \{r e^{i\theta}: 0< r< R, 0<\theta<\pi\gamma\}$ and the integral is zero since
$g$ is analytic inside $S$ and continuous up to the boundary. Thus, $\widetilde{F}$ 
is a continuation of $F$ to a larger domain 
$\mathbb{C} \setminus \overline{\widetilde{D}}$. Since $R$ is arbitrary, we conclude
that $F$ has an entire extension. Moreover, by the same argument 
we have a representation for $F$ inside $D$: 
$$
F(z) = \frac{1}{2\pi i}\int_\Gamma \frac{g(w)}{z-w} dw +g(z), \qquad z\in D.
$$

Note that $g$ satisfies the hypotheses of Lemma \ref{bhh}. Indeed, 
$|f(z)|+|f'(z)| \lesssim (|z|+1)^{-4}$ for ${\rm dist}\,(z, \mathbb{R})\ge 1$
which implies estimate \eqref{kre1}. Also, since $f$ is even and non-negative, 
it follows that $\alpha := (2\pi i)^{-1} 
\int_\Gamma g(w)dw \ne 0$. Thus, 
\begin{equation}
\label{f1}
F(z) = \alpha z^{-1} +o(z), 
\qquad z\in \mathbb{C} \setminus D, 
\end{equation}
and we conclude that $F$ has at most 
finite number of zeros in $\mathbb{C} \setminus D$. 
Let us analyse the zeros of $F$ inside $D$. We have
\begin{equation}
\label{f2}
F(z) = g(z)+\frac{\alpha}{z} + o\Big(\frac{1}{z}\Big), \qquad z\in D, 
\ |z|\to\infty.
\end{equation}
Equivalently, this means that for $G(z) = F(z^\gamma)$ we have
$$
G(z) =\frac{\sin^4 z}{z^4}+\frac{\alpha}{z^\gamma} + 
o\Big(\frac{1}{|z|^\gamma}\Big), \qquad z\in \mathbb{C}^+.
$$
The unperturbed equation
$$
\frac{\sin^4 z}{z^4}+\frac{\alpha}{z^\gamma} = 0
$$
has zeros in $\mathbb{C}^+$ whose asymptotics can be easily computed. Namely, 
if we write $-8\alpha = re^{i\beta}$, then 
the solutions $z_k=x_k+iy_k$
of the equation $(e^{-iz} - e^{iz})^4 = re^{i\beta}z^{4-\gamma}$ in 
$\mathbb{C}^+$ will have the asymptotics
$$
\begin{cases}
x_k = \frac{\pi k}{2} +\frac{\beta}{4} +o(1),\\
y_k = \big(1-\frac{\gamma}{4}\big)\ln k +\big(1-\frac{\gamma}{4}\big) 
\ln\frac{\pi}{2} +o(1),
\end{cases}
$$
as $k\to\infty$. It is easy to see that 
$$
\bigg|\frac{\sin^4 z}{z^4}+\frac{\alpha}{z^\gamma}\bigg| \gtrsim \frac{1}{|z|^\gamma} 
$$ 
when $z\in \mathbb{C}^+$, ${\rm dist}\,(z,\{z_k\})\ge \frac{1}{10}$ and $|z|$ 
is sufficiently large. By the Rouch\'e theorem, for sufficiently large $k$ the disc
$D(z_k, 1/10)$ contains exactly one zero (say, $s_k$) of $G$ and these are
all zeros of $G$ except a finite number.

Moreover, 
$$
|G'(s_k)|\asymp \frac{|\sin^3 s_k\cos s_k|}{|s_k|^4} \asymp
\frac{1}{|s_k|^\gamma}.
$$

It follows from formulas \eqref{f1} and \eqref{f2} that $F$ is an entire function of 
order $\gamma^{-1}$ and of finite type. The zeros of $F$ are given by $t_k = s_k^\gamma$. 
Dividing and mutiplying by a polynomial we can assume without loss of generality 
that all zeros of $F$ are simple and lie in $D$. 
Since $|F'(t_k)|\asymp|s_k|^{1-2\gamma}\gtrsim |t_k|^{-1}
\asymp |k|^{-\gamma}$, we can multiply $F$ by a polynomial $P$ of sufficiently 
large degree (with zeros in $D$) to achieve
$$
\sum_n \frac{1}{|F'(t_n)P(t_n)|} <\infty. 
$$
Slightly abusing the notation we now denote by $\{t_n\}$ the zero set of $FP$.
It remains to show that $FP$ has the required simple fraction expansion. We have
$$
\frac{1}{F(z)P(z)} = \sum_n \frac{1}{(FP)'(t_n) (z-t_n)} +H(z)
$$
for some entire function $H$. Since $FP$ is of order $\gamma^{-1}$,
we conclude that $H$
is of order at most $\gamma^{-1}$. However,  $|F(z)P(z)|\gtrsim 1$, $z\in
\mathbb{C}\setminus D$ (since $|F(z)|\asymp |z|^{-1}$ there). Also, 
for any $\vep>0$ we have 
$|F(z)P(z)|\asymp |g(z)| \to\infty$ when $|z|\to \infty$ and 
$z\in A(\vep, \pi\gamma-\vep)$. From this we conclude that $H\equiv 0$.                                         
\end{proof}
\bigskip


\section{Proof of Theorems \ref{main2} and \ref{main3}}
\label{proofth23}

We first state a simple proposition which shows  
that nearly invariance implies division-invariance.
The proof is similar to \cite[Proposition 5.1]{ar} and we omit it.
Let $\mathcal{H}$ be a reproducing kernel Hilbert space which consists 
of analytic functions in some domain $D$ and has 
the division property. Recall that, for a closed subspace $\mathcal{H}_0$ 
of $\mathcal{H}$, we denote by $\mathcal{Z}(\mathcal{H}_0)$ 
the set of its common zeros.

\begin{proposition}
\label{nea}
Assume that there exists $w_0\in D$ such that
$\frac{f(z)}{z-w_0} \in \mathcal{H}_0$ whenever $f\in \mathcal{H}_0$
and $f(w_0)= 0$. Then, for any $w\in D\setminus \mathcal{Z}(\mathcal{H}_0)$
and any $f\in \mathcal{H}_0$ such that $f(w) =0$, we have
$\frac{f(z)}{z-w} \in \mathcal{H}_0$.
\end{proposition}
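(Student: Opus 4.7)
The plan is to first invoke the closed graph theorem to upgrade the near-invariance at $w_0$ to a bounded ``deflation'' operator on $\mathcal{H}_0$, then use a Neumann-series expansion to propagate near-invariance to a whole neighborhood of $w_0$, and finally extend throughout $D\setminus\mathcal{Z}(\mathcal{H}_0)$ by connectedness.

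To set things up, I may assume $\mathcal{H}_0\ne\{0\}$, and then $w_0\notin\mathcal{Z}(\mathcal{H}_0)$, for otherwise iterating division by $z-w_0$ would force each $f\in\mathcal{H}_0$ to vanish at $w_0$ to arbitrary order. Fix $k_0\in\mathcal{H}_0$ with $k_0(w_0)=1$ and define
\[
Lf \;:=\; \frac{f-f(w_0)\,k_0}{z-w_0}, \qquad f\in\mathcal{H}_0.
\]
The hypothesis gives $L:\mathcal{H}_0\to\mathcal{H}_0$, and since point evaluation is continuous on the reproducing kernel space $\mathcal{H}_0$, the graph of $L$ is closed; hence $L$ is bounded by the closed graph theorem.

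For $w$ with $|w-w_0|<\|L\|^{-1}$ I set $R_w:=L(I-(w-w_0)L)^{-1}=\sum_{n\ge 0}(w-w_0)^n L^{n+1}$, a bounded operator on $\mathcal{H}_0$. Applying $(z-w_0)Lg = g-g(w_0)k_0$ term-by-term and collecting, a direct computation gives the identity
\[
(z-w)\,R_w f(z) \;=\; f(z) - \phi_f(w)\,k_0(z), \qquad \phi_f(w):=\sum_{n\ge 0}(L^n f)(w_0)\,(w-w_0)^n,
\]
valid first for $|z-w_0|>|w-w_0|$ and then everywhere by analytic continuation in $z$. Setting $z=w$ yields $\phi_f(w)\,k_0(w)=f(w)$. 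Hence if $f(w)=0$ and $k_0(w)\ne 0$, then $\phi_f(w)=0$ and $R_w f = f(z)/(z-w) \in\mathcal{H}_0$, so $\mathcal{H}_0$ is nearly invariant at $w$. The auxiliary condition $k_0(w)\ne 0$ is harmless: whenever $w\notin\mathcal{Z}(\mathcal{H}_0)$, I can replace $k_0$ with $k_0+\varepsilon g$ for small $\varepsilon\ne 0$ and some $g\in\mathcal{H}_0$ not vanishing at $w$, preserving $k_0(w_0)=1$ after a trivial rescaling. Consequently, the set $S$ of near-invariance points of $\mathcal{H}_0$ contains a neighborhood of $w_0$ in $D\setminus\mathcal{Z}(\mathcal{H}_0)$.

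To finish, I use that any nonzero element of $\mathcal{H}_0$ has a discrete zero set, so $\mathcal{Z}(\mathcal{H}_0)$ is discrete and $D\setminus\mathcal{Z}(\mathcal{H}_0)$ is open and connected. Applying the construction above with an arbitrary $w_1\in S$ in place of $w_0$ shows $S$ is open at each of its points; a chain-of-discs argument along a path from $w_0$ to a target $w^*\in D\setminus\mathcal{Z}(\mathcal{H}_0)$ then gives $S=D\setminus\mathcal{Z}(\mathcal{H}_0)$, which is precisely the claimed division-invariance. The main subtlety lies in this last step: pushing along a path requires the Neumann-series radius $\|L_{w_1}\|^{-1}$ to remain bounded away from $0$ on compact subsets of $S$, which should follow from the continuous/analytic dependence of $L_w$ on $w\in S$, but is the one technical point that needs care.
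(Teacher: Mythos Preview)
The paper omits the proof entirely, referring to Aleman--Richter \cite[Proposition~5.1]{ar}; your approach via the bounded ``backward shift'' $L$ and its Neumann series is precisely the standard argument behind that reference, so you are on the right track and Steps~1--2 are clean.

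The gap you flag in the final step is genuine, and your proposed fix does not close it. Continuity (or analyticity) of $w\mapsto L_w$ on $S$ only tells you that $\|L_w\|$ is locally bounded on $S$; it says nothing about behaviour as $w$ approaches a putative boundary point of $S$ inside $D\setminus\mathcal{Z}(\mathcal{H}_0)$. Along a path to such a point the Neumann radii could shrink to zero, and the chain-of-discs stalls. In other words, openness of $S$ plus continuity of the radius on $S$ is not enough to conclude $S=D\setminus\mathcal{Z}(\mathcal{H}_0)$.

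What is missing is that $S$ is \emph{closed} in $D\setminus\mathcal{Z}(\mathcal{H}_0)$, and for this you must use the division property of the ambient space $\mathcal{H}$, which you have not yet invoked. Suppose $w_n\in S$, $w_n\to w^*\in D\setminus\mathcal{Z}(\mathcal{H}_0)$, and take $f\in\mathcal{H}_0$ with $f(w^*)=0$; fix $g\in\mathcal{H}_0$ with $g(w^*)\ne 0$. Set $c_n=f(w_n)/g(w_n)\to 0$ and $p_n=\dfrac{f-c_n g}{z-w_n}\in\mathcal{H}_0$ (since $w_n\in S$), and let $p=\dfrac{f}{z-w^*}\in\mathcal{H}$. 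A direct computation gives
\[
p_n-p=\frac{r_n}{z-w_n},\qquad r_n:=-c_n g+(w_n-w^*)p,\quad r_n(w_n)=0,\quad \|r_n\|_{\mathcal{H}}\to 0.
\]
Now apply your own Neumann-series bound, but in $\mathcal{H}$ rather than in $\mathcal{H}_0$: since $\mathcal{H}$ has the division property at \emph{every} point of $D$, the operators $T_\zeta h=\dfrac{h-h(\zeta)\phi_\zeta}{z-\zeta}$ are bounded on $\mathcal{H}$ for all $\zeta\in D$, and the local Neumann expansion around each $\zeta_0$ shows $\|T_\zeta\|$ is locally bounded, hence bounded on the compact set $\{w_n\}\cup\{w^*\}$. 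Therefore $\|p_n-p\|_{\mathcal{H}}\lesssim\|r_n\|_{\mathcal{H}}\to 0$, and since $\mathcal{H}_0$ is closed, $p\in\mathcal{H}_0$. Thus $S$ is open and closed in the connected set $D\setminus\mathcal{Z}(\mathcal{H}_0)$, and the proof is complete.
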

\medskip

We pass to the proofs of Theorems \ref{main2} and \ref{main3}.
The key idea of the proof is due to L.~de~Branges \cite[Theorem 35]{br}. 
Assume that neither $\mathcal{H}_1 \subset \mathcal{H}_2$ nor
$\mathcal{H}_2 \subset \mathcal{H}_1$ and choose nonzero functions
$F_1, F_2 \in \mathcal{H}$ such that $F_1 \perp \mathcal{H}_2$ but
$F_1$ is not orthogonal to  $\mathcal{H}_1$, while
$F_2 \perp \mathcal{H}_1$ but $F_2$ is not orthogonal to $\mathcal{H}_2$.

Let $F\in \mathcal{H}_1$ and $G\in \mathcal{H}_2$. Define two functions
$$
\begin{aligned}
f(w) & = \bigg\langle \frac{F - \frac{F(w)}{G(w)}G}{z-w}, F_1\bigg\rangle_{\mathcal{H}(T,A,\mu)} = 
\int\frac{F(z) - \frac{F(w)}{G(w)}G(z)}{z-w} \overline{F_1(z)} d\nu(z), \\
g(w) & = \bigg\langle \frac{G - \frac{G(w)}{F(w)}F}{z-w}, F_2\bigg\rangle_{\mathcal{H}(T,A,\mu)}
= \int\frac{G(z) - \frac{G(w)}{F(w)}F(z)}{z-w} \overline{F_2(z)} d\nu(z),
\end{aligned}
$$
where $\nu$ is the measure defined by \eqref{nu} such that 
the embedding  $\mathcal{H}(T,A,\mu) \subset L^2(\nu)$ is isometric. 
The functions $f$ and $g$ are well-defined and analytic on the sets $\{w: G(w)\ne 0\}$
and $\{w: F(w)\ne 0\}$, respectively.
\medskip
\\
{\bf Step 1:} {\it $f$ and $g$ are entire functions, 
$f$ does not depend on the choice of $G$ and $g$ does not depend on the choice of $F$}.

Let $f_1$ be a function associated in a similar way to $G_1 \in \mathcal{H}_1$,
$$
f_1(w)  = 
\int\frac{F(z) - \frac{F(w)}{G_1(w)}G_1(z)}{z-w} \overline{F_1(z)} d\nu(z).
$$
Then, for $G(w) \ne 0$ and $G_1(w) \ne 0$, we have
$$
f_1(w) - f(w) = \frac{F(w)}{G(w)G_1(w)} 
\int \frac{G_1(w)G(z) - G(w)G_1(z)}{z-w} \overline{F_1(z)} d\nu(z) = 0,
$$
since $\frac{G_1(w)G - G(w)G_1}{z-w} \in \mathcal{H}_2$.

Now choosing $G$ such that $G(w) \ne 0$ we can extend $f$ analytically to 
a neighborhood of the point $w$.
Thus, $f$ and $g$ are entire functions.
\medskip
\\
{\bf Step 2:} {\it $f$ and $g$ are of zero exponential type.}

Recall that we denote by $\mathcal{C}$ the class of all regularized Cauchy transforms
with poles in $T$. Since $F$ and $G$ are in $\mathcal{H}(T,A,\mu)$, we 
have $F/A, G/A \in \mathcal{C}$ and so $F/G, G/F \in \mathcal{C}/\mathcal{C}$. 
Hence, 
$$
f, g \in  \mathcal{C} +\mathcal{C} \cdot \frac{\mathcal{C}}{\mathcal{C}}.
$$
By Corollary \ref{wer} $f$ and $g$ are of zero exponential type.
\medskip
\\
{\bf Step 3:} {\it Either $f$ or $g$ is identically zero.}

Given $w$ such that $F(w)\ne 0$, $G(w) \ne 0$, 
we have
\begin{equation}
\label{ots}
\begin{aligned}
|f(w)| & \le \bigg|\int \frac{F(z)\overline{F_1(z)}}{z-w} d\nu(z) \bigg| + 
\bigg|\frac{F(w)}{G(w)}\bigg|\cdot 
\bigg|\int \frac{G(z)\overline{F_1(z)}}{z-w} d\nu(z) \bigg|,\\
|g(w)| & \le \bigg|\int \frac{G(z)\overline{F_2(z)}}{z-w} d\nu(z) \bigg| + 
\bigg|\frac{G(w)}{F(w)}\bigg|\cdot 
\bigg|\int \frac{F(z)\overline{F_2(z)}}{z-w} d\nu(z)\bigg|.
\end{aligned}
\end{equation}
By Lemma \ref{gr1}, there exist $M>0$ and a set $E\subset (0, \infty)$ of zero 
linear density such that
$$         
|f(w)| \lesssim |w|^M\bigg(1+ \bigg|\frac{F(w)}{G(w)}\bigg|\bigg), \qquad
|g(w)| \lesssim |w|^M\bigg(1+ \bigg|\frac{G(w)}{F(w)}\bigg|\bigg), \qquad |w|\notin E.
$$
We conclude that
$$
\min\big(|f(w)|, |g(w)|\big) \lesssim |w|^M, \qquad |w|\notin E.
$$
Since $E$ has zero linear density, we can choose a sequence 
$R_j\to \infty$ such that $R_j\notin E$ and $R_{j+1}/ R_j \le 2$. Applying 
the maximum principle to the annuli $R_j\le |z|\le R_{j+1}$, we conclude that
$$
\min\big(|f(w)|, |g(w)|\big) \lesssim |w|^M, \qquad |w|\ge 1.
$$
Since both $f$ and $g$ are of zero exponential type, 
a small variation of a 
well-known deep result by de Branges \cite[Lemma 8]{br} gives that either $f$ 
or $g$ is a polynomial. 

Assume that $f$ is a nonzero polynomial. By Lemma \ref{verd}, 
there exists a set $\Omega$ of zero area density such that
$$
\bigg|\int\frac{F(z)\overline{F_1(z)}}{z-w} d\nu(z) \bigg|
+
\bigg|\int\frac{G(z)\overline{F_1(z)}}{z-w} d\nu(z) \bigg| 
=O\Big(\frac{1}{|w|}\Big), \qquad w\notin \Omega. 
$$
Hence, $|F(w)/G(w)| \to\infty$ as $|w|\to\infty$, $w\notin\Omega$, and so
$$
|g(w)| \le \bigg|\int\frac{G(z)\overline{F_2(z)}}{z-w} d\nu(z) \bigg| + 
\bigg|\frac{G(w)}{F(w)}\bigg|\cdot 
\bigg|\int \frac{F(z)\overline{F_2(z)}}{z-w} d\nu(z)\bigg| 
= O\Big(\frac{1}{|w|}\Big), \qquad w\notin \Omega\cup \widetilde{\Omega}, 
$$
where $\widetilde{\Omega}$ is another set of zero area density 
(here we again applied Lemma \ref{verd}). 
Thus, $g$ tends to zero outside a set of zero density
and so $g\equiv 0$ by Theorem \ref{dens}.
\medskip
\\
{\bf Step 4:} {\it End of the proof.} 

Without loss of generality, let 
$f\equiv 0$. Then
$$
\frac{F(w)}{G(w)}\int\frac{G(z)\overline{F_1(z)}}{z-w}  d\nu(z)
=\int\frac{F(z)\overline{F_1(z)}}{z-w}  d\nu(z)
$$                                             
for any $F\in \mathcal{H}_1$, $G\in \mathcal{H}_2$.

Recall that $F_1$ is not orthogonal to $\mathcal{H}_1$ and so we can choose
$F \in \mathcal{H}_1$ such that
$\langle F, F_1\rangle = \int F\overline{F}_1 d\nu \ne 0$. Then, by 
Lemma \ref{verd}, 
$$
\bigg|\int\frac{F(z)\overline{F_1(z)}}{z-w}  d\nu(z)\bigg| \gtrsim \frac{1}{|w|}, 
\qquad w\notin \Omega,
$$
for some set $\Omega$ of zero density. Since $G\perp F_1$ for 
any $G\in \mathcal{H}_2$, 
we have (again by Lemma \ref{verd}) 
$$
\bigg|\int\frac{G(z)\overline{F_1(z)}}{z-w}  d\nu(z)\bigg| = o\Big(\frac{1}{|w|}\Big), 
\qquad |w|\to\infty, \ w\notin \widetilde{\Omega},
$$
where $\widetilde{\Omega}$ is another set of zero density. We conclude that
$|F(w)/G(w)| \to \infty$ when $|w|\to \infty$ 
outside the set of zero density 
$\Omega \cup \widetilde{\Omega}$ (for any $G\in\mathcal{H}_2$). 
Applying this fact and 
Lemma \ref{verd} to $g$ we conclude that $|g(w)|\to 0$ 
outside a set of zero density
and so $g\equiv 0$ by Theorem \ref{dens}.

Thus, we have
\begin{equation}
\label{ghj}
\frac{G(w)}{F(w)}\int\frac{F(z)\overline{F_2(z)}}{z-w}  d\nu(z)
=\int\frac{G(z)\overline{F_2(z)}}{z-w}  d\nu(z)
\end{equation}
and we may repeat the above argument. Choose $G \in \mathcal{H}_2$
such that $\langle G, F_2\rangle = \int G\overline{F}_2 d\nu \ne 0$. Then,
by Lemma \ref{verd}, the modulus of the 
right-hand side in \eqref{ghj} is $\gtrsim |w|^{-1}$,
while the left-hand side is $o(|w|^{-1})$ when $|w|\to\infty$ 
outside a set of zero density. This contradiction proves Theorem \ref{main2}.
\qed
\medskip

\begin{proof}[Proof of Theorem \ref{main3}]
The proof essentially coincides with the proof of Theorem \ref{main2}.
Let $f$ and $g$ be defined as above. By Step 1, $f$ and $g$ are entire functions. 
Since $f, g\in \mathcal{C} +\mathcal{C} \cdot \mathcal{C} / \mathcal{C}$,
we conclude by Corollary \ref{wer} that $f$ and $g$ are of finite exponential type. 

Now we need to show that $f$ and $g$ are of zero type. 
For this we can use the property that 
$\mathcal{H}_1$ and $\mathcal{H}_2$ 
are closed under the $*$-transform. 
Therefore, we can also take $F^*$ and $G^*$ in place of $F$ and $G$
in the definition of $f$ and $g$.  
Since $T\subset \{ -h \le \ima z\le h\}$, we have 
$$
\int\frac{\alpha(z)}{z-w} d\nu(z) \lesssim |\ima w|^{-1}, 
\qquad |\ima w| \ge 2h,
$$
for any $\alpha\in L^1(\nu)$. 
Thus, 
$$
|f(w)| \lesssim \bigg( 1+ \min\bigg\{\bigg|\frac{F(w)}{G(w)}\bigg|,
\bigg|\frac{F(\overline w)}{G(\overline w)}\bigg|
\bigg\} \bigg) \frac{1}{|\ima w|}, \qquad |\ima w| \ge 2h.
$$

Note that  $F/G \in \mathcal{C}/\mathcal{C}$. Therefore, 
$F/G$ is a function of bounded type in $\CC^+ + ih$ 
and in $\CC^- -ih$. Then we can write for $w\in \CC^+$,
$$
\frac{F(w+ih)}{G(w+ih)} = O\frac{B_1 S_1}{B_2 S_2}e^{iaw}, \qquad
\frac{\overline{ F(\overline w -ih)}}{\overline{ G(\overline w - ih)}}  
= \tilde O\frac{\tilde B_1 \tilde S_1}{\tilde B_2 \tilde S_2}e^{ibw},
$$
where
$O, \tilde O$ are the corresponding outer factors, $B_1, B_2, \tilde B_1, \tilde B_2$ 
are Blaschke products in $\CC^+$, $S_1, S_2, \tilde S_1, \tilde S_2$ are 
singular inner functions in $\CC^+$ (without factors of the form $e^{icz}$) 
and $a, b\in \RR$. If at least one of the numbers $a$ 
or $b$ is non-negative we have, by Lemma \ref{boun0}, 
$$
\log |f(w)| =o(|w|), \qquad |\ima w|\ge 2h, \ \arg w\notin E,
$$
where $E\subset [0, 2\pi]$ is a union of interval of arbitrarily small 
total length. Since $f$ is an entire function of exponential type, the
classical Phragm\'en--Lindel\"of principle implies that $f$ is of zero type. 
Assume that both $a<0$ and $b<0$. Then for $g$ we have a similar estimate
$$
|g(w)| \lesssim \bigg( 1+ \min\bigg\{\bigg|\frac{G(w)}{F(w)}\bigg|,
\bigg|\frac{G(\overline w)}{F(\overline w)}\bigg|
\bigg\} \bigg) \frac{1}{|\ima w|}, \qquad |\ima w| \ge 2h.
$$
We conclude from factorizations of 
$G/F$ in $\CC^+ + ih$ and in $\CC^- -ih$ that $g$ tends to 
zero outside the union of angles 
of arbitrarily small total size whence $g\equiv 0$. 

Thus, we have seen that
\begin{enumerate} 
\item [(i)]
either both $f$ and $g$ are of zero type, and we can proceed to Step 3 as in the proof
of Theorem \ref{main2}; 
\item [(ii)]
or one of the functions $f$ or $g$ is identically zero, and we can go directly to Step 4.
\end{enumerate}

The end of the proof is the same as in Theorem \ref{main2}.
\end{proof}

\begin{remark}
{\rm Let us mention that Lemma \ref{verd} and 
Theorem \ref{dens} are essential in the case
$\mathbf{Z}$. In the cases $\mathbf{\Pi}$ and $\mathbf{A_\gamma}$ 
it is sufficient to consider the asymptotics of the Cauchy transforms
along the rays lying outside the strip or the angle in question.  }
\end{remark}
                                    
\begin{remark}     
{\rm  Theorems \ref{main2} and \ref{main3} can be extended 
with essentially the same proofs to the case of nearly invariant subspaces
having the same sets of common zeros (counting with multiplicities).}
\end{remark}
\bigskip


\section{Counterexample to the Ordering Theorem}
\label{count}

In this section we prove Theorem \ref{main4}. Our construction is similar 
to Example \ref{ex1}. To have the symmetry with respect to $\RR$ we
rotate the function $G$ from Example \ref{ex1}. 

\begin{proof}[Proof of Theorem \ref{main4}] {\bf Step 1.} 
Define the entire functions $A_1$ and $A_2$ by
$$
A_1(z) = \prod_{k=1}^\infty \bigg(1-\frac{e^{2\pi z}}{e^{2\pi k}}\bigg), \qquad 
A_2(z) = \prod_{k=1}^\infty \bigg(1-\frac{e^{-2\pi z}}{e^{2\pi k}}\bigg),
$$
and put $A=A_1A_2$. Then the zero set of $A$ is given by
$T = \{t_n\} = \{k+im: k\in \mathbb{Z}\setminus\{0\}, m\in \mathbb{Z} \}$.
Put $\mu_n = |t_n|^{-3}$. 

Now we construct the functions $G_1$ and $G_2$ as follows. Let $P_1$ be some polynomial 
of degree 4 such that $\mathcal{Z}_{P_1} \subset \mathcal{Z}_{A_2}$ 
and let $G_1$ be an entire function with simple zeros 
$\tilde t_n = t_n+\delta_n$, $t_n\in \mathcal{Z}_{A_2} \setminus \mathcal{Z}_{P_1}$, 
$|\delta_n|<1/100$,
such that: 
\begin{enumerate} 
\item [(i)]
$\mathcal{Z}_{G_1}$ is so close to the set 
$\mathcal{Z}_{A_2} \setminus \mathcal{Z}_{P_1}$ that
\begin{equation}
\label{tre}
|G_1(z)P_1(z)| \asymp |A_2(z)|, \qquad {\rm dist}\, (z, \mathcal{Z}_{A_2})>1/10; 
\end{equation}
\item [(ii)]
$\mathcal{Z}_{G_1} \cap T =\emptyset$.
\end{enumerate}
Note that condition \eqref{tre} implies (by the maximum modulus 
principle applied to the function $(z-t_n)G_1P_1/A_2$ in $\{|z-t_n|\le 1/10\}$) that
\begin{equation}
\label{tre1}
|G_1(t_n)P_1(t_n)| \lesssim |A_2'(t_n)|,  \qquad t_n \in \mathcal{Z}_{A_2}.
\end{equation}

Analogously, we define a polynomial $P_2$ and the function $G_2$ such that
$|G_2(z)P_2(z)| \asymp |A_1(z)|$ when ${\rm dist}\, (z, \mathcal{Z}_{A_1})>1/10$. 
\medskip
\\
{\bf Step 2.} Let us show that $G_1, G_2$ belong to the corresponding space  
$\mathcal{H}(T,A,\mu)$. Similarly to Example \ref{ex1} we have
$$
\begin{aligned}
|A_1(z)| & \asymp 1, \qquad \rea z \le 0, \\
|A_1'(t_n)| & \gtrsim 1, \qquad t_n \in \mathcal{Z}_{A_1}, \\
|A_1(z)| & \gtrsim 1, \qquad {\rm dist}\, (z, \mathcal{Z}_{A_1})>1/10.
\end{aligned}
$$ 
Hence, 
$$
\bigg|\frac{G_1(z)}{A(z)}\bigg| \lesssim \bigg|\frac{1}{P_1(z)A_1(z)}\bigg|
\lesssim \frac{1}{|P_1(z)|}, 
\qquad {\rm dist}\, (z, T)>1/10.
$$
Also, by \eqref{tre} and \eqref{tre1},
$$
\begin{aligned}
\sum_{t_n\in T\setminus \mathcal{Z}_{P_1}} \frac{|G_1(t_n)|^2}{|A'(t_n)|^2\mu_n} & \asymp
\sum_{t_n \in \mathcal{Z}_{A_1}}  \frac{|A_2(t_n)|^2}{|P_1(t_n)|^2
|A_2(t_n)|^2|A_1'(t_n)|^2\mu_n}  +
\sum_{t_n \in \mathcal{Z}_{A_2} \setminus \mathcal{Z}_{P_1}} 
\frac{|G_1(t_n)|^2}{|A_1(t_n)|^2|A_2'(t_n)|^2\mu_n} \\
& \lesssim 
\sum_{t_n \in \mathcal{Z}_{A_1}}  \frac{1}{|P_1(t_n)|^2
|A_1'(t_n)|^2\mu_n} + 
\sum_{t_n \in \mathcal{Z}_{A_2} \setminus \mathcal{Z}_{P_1}} 
\frac{1}{|P_1(t_n)|^2|A_1(t_n)|^2\mu_n}. 
\end{aligned}
$$
Since $\mu_n = |t_n|^{-3}$ and $|P_1(t_n)| \asymp |t_n|^4$, 
$t_n\in T\setminus \mathcal{Z}_{P_1}$, we conclude that
$\sum_{t_n\in T} |G_1(t_n)|^2/(|A'(t_n)|^2\mu_n) < \infty$. By Theorem \ref{inc}, 
$G_1 \in \mathcal{H}(T,A,\mu)$.
                                                                           
Clearly, we can construct $G_1$ and $G_2$ so that $G_1^* = G_1$, $G_2^* = G_2$,
whence the spaces $\mathcal{H}_{G_1}$ and $\mathcal{H}_{G_2}$ are $*$-closed.
\medskip
\\
{\bf Step 3.} Now we construct a function $f \in \mathcal{H}(T,A,\mu)$ such that $f\perp \mathcal{H}_{G_1}$, but 
$\langle\frac{G_2}{z-\lambda}, f\rangle \ne 0$ for some $\lambda\in \mathcal{Z}_{G_2}$. Thus, 
$\mathcal{H}_{G_2}$ is not contained in $\mathcal{H}_{G_1}$. By the symmetry 
of the construction, also $\mathcal{H}_{G_1}$ is not contained in $\mathcal{H}_{G_2}$.

Since $|A_1(z)| \asymp 1$, $\rea z \le 0$,
we have $\sum_n |A_1(t_n)|^2 |t_n|^{-3} <\infty$. Put
$$
f(z) = A (z)\sum_n \frac{\overline{A_1(t_n)}}{|t_n|^3 (z-t_n)} = 
A(z) \sum_n \frac{\overline{A_1(t_n)}\mu_n^{1/2}}{|t_n|^{3/2} (z-t_n)}.
$$
Then $f\in \mathcal{H}(T,A,\mu)$. For $\lambda\in \mathcal{Z}_{G_1}$ 
we have
$$
\bigg\langle \frac{G_1}{z-\lambda}, f \bigg\rangle = 
\sum_n \frac{G_1(t_n)}{(t_n-\lambda)A'(t_n)\mu_n^{1/2}}\cdot \frac{A_1(t_n)}{|t_n|^{3/2}} = 
\sum_{t_n \in \mathcal{Z}_{A_2}}  
\frac{G_1(t_n)}{A_2'(t_n)(t_n-\lambda)}.
$$
To show that the last expression is 0, we prove the interpolation formula
$$
\sum_{t_n \in \mathcal{Z}_{A_2}}    
\frac{G_1(t_n)}{A_2'(t_n)(z- t_n)} = \frac{G_1(z)}{A_2(z)}.
$$
Once it is proved, taking $z=\lambda \in \mathcal{Z}_{G_1}$ we obtain that 
$\langle \frac{G_1}{z-\lambda}, f \rangle$.
As usual, consider the entire function
$$
H(z) = \frac{G_1(z)}{A_2(z)} - 
\sum_{t_n \in \mathcal{Z}_{A_2}}    
\frac{G_1(t_n)}{A_2'(t_n)(z- t_n)}.
$$
By \eqref{tre1}, $|G_1(t_n)/A'(t_n)| \lesssim |P_1(t_n)|^{-1}$
for $t_n \in \mathcal{Z}_{A_2} \setminus
\mathcal{Z}_{P_1}$, and it is easy to see that
$$
\bigg|\sum_{t_n \in \mathcal{Z}_{A_2}}    
\frac{G_1(t_n)}{A_2'(t_n)(z- t_n)} \bigg| \to 0, \qquad |z|\to\infty, \ 
{\rm dist}\, (z, \mathcal{Z}_{A_2})>1/10.
$$
Since, by \eqref{tre}, $|G_1(z)/A_2(z)| \asymp |P_1(z)|^{-1}$
when ${\rm dist}\, (z, \mathcal{Z}_{A_2})>1/10$, 
we conclude that $|H(z)| \to 0$, when $|z|\to\infty$
and ${\rm dist}\, (z, \mathcal{Z}_{A_2})>1/10$. Hence, $H\equiv 0$.
\medskip
\\
{\bf Step 4.} It remains to show that 
$\langle\frac{G_2}{z-\lambda}, f\rangle \ne 0$ for some $\lambda\in \mathcal{Z}_{G_2}$.
As above we have 
$$
\bigg\langle \frac{G_2}{z-\lambda}, f \bigg\rangle = 
\sum_n \frac{G_2(t_n)}{(t_n-\lambda)A'(t_n)\mu_n^{1/2}}\cdot \frac{A_1(t_n)}{|t_n|^{3/2}} = 
\sum_{t_n \in \mathcal{Z}_{A_2}}  
\frac{G_2(t_n)}{A_2'(t_n)(t_n-\lambda)}.
$$
Assume that $\langle\frac{G_2}{z-\lambda}, f\rangle = 0$ 
for any $\lambda\in \mathcal{Z}_{G_2}$. Then the entire function 
$A_2(z) \sum_{t_n \in \mathcal{Z}_{A_2}}  
\frac{G_2(t_n)}{A_2'(t_n)(z-t_n)}$ vanishes on $\mathcal{Z}_{G_2}$ and we can write
$$
A_2(z) \sum_{t_n \in \mathcal{Z}_{A_2}}  
\frac{G_2(t_n)}{A_2'(t_n)(z-t_n)} = G_2(z)U(z)
$$
for some entire function $U$. Since $G_2\ne 0$ on $T$, comparing the values at 
$t_n\in \mathcal{Z}_{A_2}$, we obtain $U(t_n) = 1$,  $t_n \in \mathcal{Z}_{A_2}$.
Hence, we can write $U = 1+ A_2V$ for some entire function $V$. Dividing by $G_2A_2$
we get
$$
V(z) = \frac{1}{G_2(z)}
\sum_{t_n \in \mathcal{Z}_{A_2}} \frac{G_2(t_n)}{A_2'(t_n)(z-t_n)} - \frac{1}{A_2(z)}.
$$ 
We know that $|A_2(z)| \gtrsim 1$ and $|G_2(z)| \gtrsim |P_2(z)|^{-1}$ when 
${\rm dist}\, (z, T)>1/10$. 
Since $|G_2(t_n)| \asymp |A_1(t_n)|/|P_2(t_n)| \asymp 1/|P_2(t_n)|$,
$t_n \in \mathcal{Z}_{A_2}$, we also see that the Cauchy transform in the above formula
tends to zero when $|z|\to\infty$, $
{\rm dist}\, (z, T)>1/10$. We conclude that $V$ is at most a polynomial. However, 
we also have $\limsup_{x\to+\infty}|G_2(x)|=\infty$, whence $V$ is at most a constant.
Since $\lim_{x\to+\infty} A_2(x) = 1$, we conclude that this constant is $-1$.

We have arrived to the following representation:
\begin{equation}
\label{wrt}
\sum_{t_n \in \mathcal{Z}_{A_2}}  
\frac{G_2(t_n)}{A_2'(t_n)(z-t_n)} = \frac{G_2(z)}{A_2(z)} - G_2(z)
\end{equation}                                      
and we need to show that it is impossible. Note that \eqref{wrt} 
implies that 
$$
|1-A_2(x)|\lesssim |G_2(x)|^{-1}, \qquad x >0.
$$
However, 
$$
1-A_2(x) = 1-
\prod_{k=1}^\infty \big( 1- e^{-2\pi x -2\pi k}\big) > 1-
\big( 1- e^{-2\pi x -2\pi }\big) = e^{-2\pi x -2\pi }.
$$
On the other hand, it is clear that 
$$
\frac{\log|G_2(x)|}{x} \to\infty, \qquad  
x\to+\infty,\ \ \dist(x, \mathcal{Z}_{G_2}) >1/10.
$$
Indeed, for the lacunary product $L(z) = \prod_{k=1}^\infty \big( 1- e^{-2\pi k}z\big)$
and for any $N, \delta>0$,   
we have $|L(z)| \gtrsim |z|^N$, ${\rm dist}\, (z,\{e^{2\pi k}\})>\delta$. 
By the construction, $|G_2(x)|\asymp |L(e^{x})|/|P_2(x)|
\gtrsim e^{Nx}/|P_2(x)|$ when $\dist(x, \mathcal{Z}_{G_2}) >1/10$.
This contradiction proves Theorem \ref{main4}.
\end{proof}



\begin{thebibliography}{BRSHZE}

\bibitem{abb}  E.~Abakumov, A.~Baranov, Yu.~Belov, 
{\it Localization of zeros for Cauchy transforms}, Int. Math. Res. Not. 2015, {\bf15} (2015), 6699--6733.

\bibitem{ar}  A.~Aleman, S.~Richter,
{\it Simply invariant subspaces of $H^2$ of some multiply 
connected regions}, Integr Equat. Oper. Theory {\bf 24} (1996), 2, 127--155.
  
\bibitem{bbb-fock} A.~Baranov, Yu.~Belov, A.~Borichev, 
{\it Summability properties of Gabor expansions}, arXiv:1706.05685, to 
appear in J. Funct. Anal.

\bibitem{by} A.~Baranov, D.~Yakubovich,
{\it Completeness and spectral synthesis 
of nonselfadjoint one-dimensional perturbations 
of selfadjoint operators}, 
Adv. Math. {\bf 302} (2016), 740--798.

\bibitem{by1} A.~Baranov, D.~Yakubovich,
{\it Spectral theory of rank one perturbations of normal compact operators},
preprint.

\bibitem{bms} Yu.~Belov, T.~Mengestie, K.~Seip,
{\it Discrete Hilbert transforms on sparse sequences},
Proc. Lond. Math. Soc. {\bf 103} (2011) 3, 73--105.

\bibitem{br} L.~de Branges, \emph{Hilbert Spaces of Entire
Functions,} Prentice--Hall, Englewood Cliffs, 1968.

\bibitem{gama} J.~B.~Garnett, D.~E.~Marshall, {\it Harmonic Measure},
Cambridge University Press, 2005.

\bibitem{Gohb_Krein}                                 
I.~Gohberg, M.~Krein,
{\it Introduction to the Theory of
Linear Nonselfadjoint Operators}, Amer. Math. Soc., Providence, R.I., 1969.
                                  
\bibitem{hj}                                        
V.~Havin, B.~J\"oricke,
\textit{The Uncertainty Principle in Harmonic Analysis},
Springer-Verlag, Berlin, 1994.

\bibitem{lag}
J.~Lagarias, {\it Hilbert spaces of entire functions and Dirichlet $L$-functions}, 
Frontiers in number theory, physics, and geometry. I, 365--377, Springer, Berlin, 2006.

\bibitem {ko} P.~Koosis, \textit{Introduction to $H^p$ spaces}.
Cambridge Univ. Press, Cambridge, 1980.

\bibitem{ko1}
P.~Koosis,
\textit{The Logarithmic Integral I}, Cambridge Stud. Adv. Math.
12, 1988.


\bibitem{krein47}
M.~Krein, {\it A contribution to the theory of entire functions
of exponential type}, Izv. Akad. Nauk SSSR Ser. Mat. {\bf 11}
(1947), 4, 309--326.

\bibitem {lev1} B.~Ya.~Levin, {\it Distribution of zeros
of entire functions}, GITTL, Moscow, 1956;
English translation: Amer. Math. Soc., Providence, 1964;
revised edition: Amer. Math. Soc., 1980.

\bibitem{lev} B.~Ya.~Levin, {\it Lectures on Entire Functions},
Transl. Math. Monogr. Vol. 150, AMS, Providence, RI, 1996.					 

\bibitem{mp} N.~Makarov, A.~Poltoratski, 
{\it Meromorphic inner functions, Toeplitz kernels, and the
uncertainty principle, in Perspectives in Analysis},
Springer Verlag, Berlin, 2005, 185--252.

\bibitem {os}
J.~Ortega-Cerd\`a, K.~Seip, {\it Fourier frames}, Ann. of Math. (2),
{\bf 155}  (2002), 3, 789--806.

\bibitem{rem} C.~Remling, {\it Schr\"{o}dinger operators and de 
Branges spaces}, J. Funct. Anal. {\bf 196} (2002),  2, 323--394. 

\bibitem{rom} R.~Romanov, {\it Canonical systems and de Branges spaces}, 
arXiv:1408.6022.

\bibitem{shers} V.~B.~Sherstyukov, {\it Expanding the reciprocal 
of an entire function with zeros in a strip in a Krein series}, 
Sb. Math., {\bf 202} (2011), 12, 1853--1871.

\end{thebibliography}
\end{document}